\def\version{}
\newcommand\pa{\partial}
\newcommand\ov{\overline}
\newcommand\ve{\varepsilon}
\def\Re{{\rm Re\, }}
\def\Im{{\rm Im\,}}
\providecommand\C{{\mathbb C}}
\renewcommand\C{{\mathbb C}}
\newcommand{\R}{{\mathbb R}}
\newcommand{\const}{{\rm const}}
\font\thf cmssdc10 at 11pt
\theoremstyle{plain}
\newtheorem{theorem}{\thf Theorem}[section]
\newtheorem{lemma}[theorem]{\thf Lemma}
\newtheorem{corollary}[theorem]{\thf Corollary}
\newtheorem{proposition}[theorem]{\thf Proposition}
\theoremstyle{definition}
\theoremstyle{remark}
\newtheorem{remark}[theorem]{Remark}
\begin{document}

\title{Global well-posedness for Dirac equation with concentrated nonlinearity}

\author{
{\sc Elena Kopylova}
\footnote{Research supported by the Austrian Science Fund (FWF) under Grant No. P27492-N25 
and RFBR grant 18-01-00524.}
\\ 
{\it\small Faculty of Mathematics of Vienna University and IITP RAS
}}

\date{\version}

\maketitle

\begin{abstract}
We prove global well-posedness for  3D Dirac equation with a concentrated nonlinearity.
\end{abstract}
\section{Introduction}
\label{int-results}
We denote by $D_m$ the Dirac operator  $D_m:=-i\alpha\cdot\nabla+m\beta $,
where  $m>0$,  $\alpha_k$ with $k=1,2,3$ and $\beta$ are $4\times 4$ Dirac matrices.
We consider the Dirac field coupled to a nonlinear oscillator 
\begin{equation}\label{Dirac}
\left\{\begin{array}{c}
i\dot \psi(x,t)=D_m\psi(x,t)-D^{-1}_m\zeta(t)\delta(x)\\\\
\lim\limits_{\ve\to 0+}\lim\limits_{x\to 0}K_m^{\ve}\Big(\psi(x,t)-\zeta(t)g(x)\Big)= F(\zeta(t))
\end{array}\right|\quad x\in\R^3,\quad t\in\R.
\end{equation} 
Here $\psi(x,t)$, $\zeta(t)$  are vector functions with  values in $\C^4$,   $g(x)$ is the Green function of the operator $-\Delta+m^2$ in $\R^3$, 
\begin{equation}\label{Green}
  g(x)=\frac{e^{-m|x|}}{4\pi|x|},
\end{equation}
and $K_m^\ve=(-\Delta+m^2)^{-\ve}$ is a  smoothing operator, defined as
$$
(K_m^{\ve} \psi)(x)=\frac 1{(4\pi)^3}\int e^{-i\xi\cdot x}\frac{\hat \psi(\xi)\,d^3\xi}{(\xi^2+m^2)^\ve},\quad\ve\ge 0,
$$
where $\hat \psi(\xi)$ is the Fourier transform of $\psi(x)$.
Obviously, $(K_m^{\ve}\psi)(x)\to\psi(x)$  as $\ve\to0$. Hence, in the limit $\ve\to0$, the coupling in (\ref{Dirac}) formally depends on the value of the regular parts
$\psi_{reg}(x,t)=\psi(x,t)-\zeta(t)g(x)$ of the Dirac field $\psi(x,t)$  at one point $x=0$ only.

We assume that the nonlinearity $F_j(\zeta)=F_j(\zeta_j)$
admits a real-valued potential:
\begin{equation}\label{FU}
 F_j(\zeta_j)=\pa_{\ov\zeta_j} U(\zeta),\quad U\in C^2(\C^4),\quad j=1,....,4,
\end{equation} 
 where  $\pa_{\ov\zeta_j}:=\frac12(\frac{\pa U}{\pa\zeta_{j1}}+i\frac{\pa U}{\pa\zeta_{j2}})$ with $\zeta_{j1}:=\Re\zeta_j$ and $\zeta_{j2}:=\Im\zeta_j$, and
\begin{equation}\label{bound-below}
U(\zeta)\ge b|\zeta|^2-a, \quad{\rm for}\ \zeta\in\C^4,\quad
{\rm where}\ b> 0~~~{\rm and}~~~a\in\R.
\end{equation}
Our main result is as follows. For  initial data of type
\begin{equation}\label{in_d}
\psi(x,0)=f(x)+\zeta_{0}g(x), \quad f\in H^2(\R^3)\otimes\C^4, \quad\zeta_0\in\C^4,
\end{equation}
we prove a global well-posedness of the Cauchy problem for  the system(\ref{Dirac}).

Let us comment on our approach. 
We develop the approach which was  introduced in \cite{K16, NP} in the context of the Klein-Gordon  and wave equations.
First, we obtain  some regularity  properties  i) of   solutions  $\psi_{\rm free}(x,t)$ to the  free Dirac equation 
with  initial data (\ref{in_d}),  and ii) of  solutions $\psi_{S}(x,t)$  to the   Dirac equation   with zero initial data and with the source $D_m^{-1}\xi(t)\delta(x)$, 
where $\xi \in C^1 [0,\infty)$ ( Lemmas \ref{imp} and \ref{lim_lam}, and Propositions \ref{Phi_lim}  and \ref{PropD}). 
We use these regularity properties to prove the existence of a local solution to (\ref{Dirac}) of the type
\[
\psi(x,t)=\psi_{\rm free}(x,t)+\psi_{S}(x,t).
\]
We show that  $\zeta(t)$ is a solution to a  first-order nonlinear integro-differential equation driven by $\psi_{\rm free}(0,t)$. 
Then we prove that  conditions  (\ref{FU})--(\ref{bound-below}) provide the energy conservation.  
Finally, we use the energy conservation to obtain the global existence theorem.  
Let us note  that the system (\ref{Dirac}) without smoothing operator $K_m^{\ve}$  is not well posed (see Remark \ref{rek}).

As was noted above, the Dirac equation with concentrated nonlinearities  is not well posed
in contrast to corresponding Klein-Gordon  equation \cite{K16} and wave equation \cite{NP}.  
So  we should to introduce a smoothing operator $K_m^{\ve}$ resembling  the Pauli-Willars  renormalization.
As the result, we have found a novel model of nonlinear point interaction which provides the Hamilton structure and needed a priori estimates.
However, the introduction  of the smoothing operator in (\ref{Dirac}) leads to additional  difficulties in  justification of numerous   limits.
We overcome these difficulties  using  subtle properties of special functions. 
\section{Main result}
We denote by $a-$ any number $a-\ve$ with an arbitrary small, but fixed $\ve > 0$. We  fix a nonlinear function $F:\C^4\to\C^4$ and define the domain
\begin{eqnarray}\nonumber
{\cal D}_F&=&\{\psi\in L^2(\R^3)\otimes\C^4: \psi(x)=\psi_{reg}(x)+\zeta g(x), ~~ \zeta\in\C^4,~~\psi_{reg}\in H^{\frac 32-}(\R^3)\otimes\C^4,\\
\nonumber
&&\exists\lim_{\ve\to 0+}\lim_{x\to 0} K_m^\ve\psi_{reg}(x)=F(\zeta)\},
\end{eqnarray}
which generally is not a linear space. 
Everywhere below we will write $L^2(\R^3)$ and $H^s(\R^3)$ instead of  $L^2(\R^3)\otimes\C^4$ and   $H^{s}(\R^3)\otimes\C^4$.
Denote $\Vert\cdot\Vert=\Vert\cdot\Vert_{L^2(\R^3)}$.
\begin{theorem}\label{theorem-well-posedness}
Let  conditions (\ref{FU}) and (\ref{bound-below}) hold. Then 
\begin{enumerate}
\item
For every initial data $\psi_0(x)=f(x)+\zeta_0  g(x)$ with $f\in H^{2}(\R^3)$  the equation (\ref{Dirac})
has a unique solution $\psi(x,t)$ such that 
\[
\psi(\cdot,t)\in C(\R,{\cal D}_F).
\]
\item
The following conservation law holds:
\begin{equation}\label{ec}
{\cal H}_F(\psi(\cdot,t)):=
\Vert D_m\psi_{reg}(\cdot,t)\Vert^2+U(\zeta(t))=\const, \quad t\in\R.
\end{equation}
\item
The following a priori bound holds:
\begin{equation}\label{apb}
|\zeta(t)|\le C(\psi_0),\quad t\in\R. 
\end{equation}
\end{enumerate}
\end{theorem}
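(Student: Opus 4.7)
The plan is to follow the decomposition strategy announced in the introduction: seek the solution in the form $\psi(x,t)=\psi_{\rm free}(x,t)+\psi_S(x,t)$, where $\psi_{\rm free}$ solves the free Dirac equation $i\dot\psi_{\rm free}=D_m\psi_{\rm free}$ with the initial data (\ref{in_d}), and $\psi_S$ solves the inhomogeneous equation $i\dot\psi_S=D_m\psi_S-D_m^{-1}\zeta(t)\delta(x)$ with zero initial data, the function $\zeta$ still being unknown. Since $D_m^2=-\Delta+m^2$ and $g$ is its Green function, the static response to the source is $\zeta g$; hence the regular part reads
\[
\psi_{reg}(x,t)=\psi_{\rm free}(x,t)+\psi_S(x,t)-\zeta(t)g(x),
\]
which by Lemmas~\ref{imp}--\ref{lim_lam} and Propositions~\ref{Phi_lim}--\ref{PropD} lies in $H^{3/2-}(\R^3)$ for each $t$.

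\textbf{Step 1: reduction to an integral equation for $\zeta$.} Inserting the decomposition into the matching condition in (\ref{Dirac}), applying $K_m^\ve$ to each summand, and performing the limits $x\to 0$ followed by $\ve\to 0+$ (justified by the cited regularity results) should produce a Volterra-type relation
\[
F(\zeta(t))=\psi_{\rm free}(0,t)+\int_0^t\Phi(t-s)\,\zeta(s)\,ds,
\]
with a kernel $\Phi$ extracted from the propagator of $D_m$ via special-function identities. This is the central reduction: $\psi$ is completely determined by $\zeta$, and $\zeta$ is determined by this scalar integro-differential equation driven by the known term $\psi_{\rm free}(0,t)$, which is $C^1$ in $t$ since $f\in H^2\hookrightarrow C^{1/2}$.

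\textbf{Step 2: local well-posedness.} On a short interval $[0,T]$ the affine map $\zeta\mapsto\psi_{\rm free}(0,\cdot)+\int_0^\cdot\Phi(\cdot-s)\zeta(s)\,ds$ is Lipschitz in $C([0,T],\C^4)$. Inverting $F$ locally near $\zeta_0$ by the implicit function theorem (allowed by the $C^2$ potential in (\ref{FU})) and applying Banach's fixed-point theorem produces a unique $\zeta\in C([0,T],\C^4)$, and hence a unique $\psi\in C([0,T],{\cal D}_F)$.

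\textbf{Step 3: energy conservation, a priori bound, globalisation.} Formal time-differentiation gives
\[
\frac{d}{dt}{\cal H}_F(\psi(\cdot,t))=2\Re\langle D_m\psi_{reg},D_m\dot\psi_{reg}\rangle+2\Re\langle F(\zeta),\dot\zeta\rangle,
\]
and the Hamiltonian hypothesis (\ref{FU}) combined with $D_m^2=-\Delta+m^2$ and the matching condition should make these two contributions cancel. The main obstacle will be to make this manipulation rigorous: one must commute the limit $\ve\to 0+$, the spatial evaluation at $x=0$, and the $t$-derivative, exactly the point where the delicate properties of special functions alluded to at the end of the introduction enter. Once (\ref{ec}) is proved, (\ref{bound-below}) yields $b|\zeta(t)|^2\le a+{\cal H}_F(\psi_0)$, which is (\ref{apb}). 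Because the local existence time in Step~2 depends only on a bound on $|\zeta|$, this a priori control rules out blow-up, and a standard continuation argument extends $\psi$ to all $t\in\R$ (time reversal being symmetric).
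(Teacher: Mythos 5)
Your overall scaffolding mirrors the paper's: decompose $\psi=\psi_{\rm free}+\psi_S$, reduce to an equation for $\zeta$, prove local existence, derive energy conservation, then continue globally via the resulting a priori bound. However, two parts of your Step~1--2 diverge from what actually happens and contain genuine gaps.

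First, the reduction in Step~1 is not of the form
$F(\zeta(t))=\psi_{\rm free}(0,t)+\int_0^t\Phi(t-s)\zeta(s)\,ds$.
The term $\varphi_S(x,t)-\zeta(t)g(x)$ near $x=0$ behaves like
$\frac{\zeta(t-|x|)-\zeta(t)}{4\pi|x|}+\frac{e^{-m|x|}-1}{4\pi|x|}\zeta(t)+\dots$,
whose limit contains $-\dot\zeta(t)/(4\pi)$ (see Lemma~\ref{Phi_lim}, equation (\ref{dif1})). Hence the matching condition is a genuine first-order integro-differential (delay) equation for $\zeta$, as in (\ref{delay}), not a Volterra integral equation for $\zeta$ alone. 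This omission is fatal for Step~2: without the $\dot\zeta$ term you are left with an algebraic relation $F(\zeta(t))=(\text{known function of }t)+(\text{Volterra integral of }\zeta)$, which you propose to solve by locally inverting $F$ near $\zeta_0$ via the implicit function theorem. But invertibility of $DF(\zeta_0)$ is nowhere assumed; (\ref{FU}) only gives $F\in C^1$. The paper never inverts $F$. Because the actual equation is $\dot\zeta(t)=4\pi\big(\lambda(t)+\dots-\tilde F(\zeta(t))\big)+\dots$, the nonlinearity enters on the \emph{right-hand side} of an ODE, and a contraction argument applies once $\tilde F$ is Lipschitz.

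Second, and related, you do not truncate the nonlinearity. The paper replaces $F$ by a Lipschitz modification $\tilde F$ agreeing with $F$ on $|\zeta|\le\Lambda(\psi_0)$, with $\Lambda$ chosen from ${\cal H}_F(\psi_0)$ and (\ref{bound-below}). This yields a local existence time $\tau$ depending only on $\Lambda(\psi_0)$, so that after proving the a priori bound $|\zeta(t)|\le\Lambda(\psi_0)$ (Corollary~\ref{cor1}), one can iterate with the \emph{same} $\tau$ and identify $\tilde F$ with $F$ along the solution. Your version only asserts ``depends only on a bound on $|\zeta|$'' without arranging this; with merely $C^1$ data the fixed-point contraction constant and hence $T$ would degenerate near the boundary of the ball where $F$ is not controlled. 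Your Step~3 correctly identifies the need to commute the $\ve\to 0+$ limit with the time derivative but leaves it unexecuted; the paper does the computation for $\Vert K_m^\ve D_m\psi_{reg}\Vert^2$ and passes to the limit afterwards (Lemma~\ref{H-pr}).
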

Obviously, it suffices to prove Theorem \ref {theorem-well-posedness} for $t\ge 0$. We will do it in Section \ref{nonlin-sect}.
Previously, we obtain some necessary properties of the free  Dirac equation an of the Dirac equations  with   sources of special kinds.
\section{Free Dirac equation}
\label{fD-sect}
Consider   the solution $\psi_f(x,t)$  to the free Dirac equation
\begin{equation}\label{CP1}
i\dot \psi_f(x,t)=D_m \psi_f(x,t),\quad \psi_f(x,0)=f(x),
\end{equation}
with  initial data $f\in H^2(\R^3)$. Evidently,  $\psi_f(\cdot,t)\in C([0,\infty), H^2(\R^3))$.
Denote
\begin{equation}\label{lam}
\lambda(t):=\psi_f(0,t)\in C[0,\infty)\otimes\C^4.
\end{equation}
\begin{lemma}\label{imp}
\begin{equation}\label{dot-lam}
\dot\lambda\in L^2_{loc}[0,\infty)\otimes\C^4.
\end{equation}
\end{lemma}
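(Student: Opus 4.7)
The plan is to express $\dot\lambda(t)$ as a sum of one-dimensional oscillatory integrals in the energy variable $\omega(\xi)=\sqrt{|\xi|^2+m^2}$ and invoke Plancherel's theorem in $t$. From the Clifford relations one has $\hat D_m(\xi)^2=\omega(\xi)^2\,I$, hence the spectral identity
\begin{equation*}
e^{-it\hat D_m(\xi)}=\cos(\omega t)\,I-i\omega^{-1}\sin(\omega t)\,\hat D_m(\xi).
\end{equation*}
Inserting this into $\lambda(t)=(2\pi)^{-3}\int e^{-it\hat D_m(\xi)}\hat f(\xi)\,d^3\xi$ and differentiating formally in $t$ yields
\begin{equation*}
\dot\lambda(t)=-\frac{i}{(2\pi)^3}\int_{\R^3}\bigl[\cos(\omega t)\,\hat D_m(\xi)-i\omega\sin(\omega t)\,I\bigr]\,\hat f(\xi)\,d^3\xi.
\end{equation*}

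I would then pass to spherical coordinates $\xi=r\sigma$ and substitute $r=\sqrt{\omega^2-m^2}$; since $r^2\,dr=\omega\sqrt{\omega^2-m^2}\,d\omega$, every term in $\dot\lambda(t)$ becomes a one-dimensional integral of the form $\int_m^\infty e^{\pm i\omega t}\,h(\omega)\,d\omega$, where $h(\omega)$ is a polynomial in $\omega$ and $\sqrt{\omega^2-m^2}$ of total degree at most three, multiplied by a spherical average of $\hat f\bigl(\sqrt{\omega^2-m^2}\,\sigma\bigr)$ against a constant or a single component $\sigma_k$. Plancherel then reduces the problem to checking that $h\in L^2(m,\infty)$; by Cauchy--Schwarz on $S^2$ and reverting to the variable $r$, each such $L^2$-norm is dominated by
\begin{equation*}
C\int_0^\infty (r^2+m^2)^{3/2}\,r^3\int_{S^2}|\hat f(r\sigma)|^2\,d\sigma\,dr \;\le\; C\|f\|_{H^2}^2,
\end{equation*}
where the last inequality uses $r\le\sqrt{r^2+m^2}$. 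This gives even the stronger global bound $\dot\lambda\in L^2(\R)\otimes\C^4$, which trivially implies the required $L^2_{loc}$ statement.

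The main technical obstacle is weight-counting: the Jacobian $\omega/\sqrt{\omega^2-m^2}$ of the radial substitution is singular at the spectral edge $\omega=m$, but is exactly cancelled by the factor $r^2=\omega^2-m^2$ coming from the spherical volume element. More delicately, the matrix factor $\hat D_m(\xi)=\alpha\cdot\xi+m\beta$ contributes an extra power of $|\xi|\le\omega$ in the integrand, and it is precisely this that makes the full $H^2$-hypothesis (rather than $H^{3/2+}$, which would suffice for the $\omega\sin(\omega t)I$-piece alone) necessary in this scheme. Finally, the formal differentiation under the integral sign is justified by first taking $f$ Schwartz, where everything is classical, and passing to the limit in $L^2(\R,dt)$ using the Plancherel bound above, which is linear in $f$.
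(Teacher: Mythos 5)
Your argument is correct but takes a genuinely different route from the paper. The paper reduces to the Klein--Gordon case: it writes $\psi_f=(-i\partial_t-D_m)u$ with $u$ a free Klein--Gordon solution, introduces $v=\dot u$ and $w=D_m u$, checks that these are Klein--Gordon solutions with $H^2\oplus H^1$ data, and invokes \cite[Corollary 4.3]{K16} to conclude $\dot v(0,\cdot),\dot w(0,\cdot)\in L^2_{loc}$, whence $\dot\lambda=-i\dot v(0,\cdot)-\dot w(0,\cdot)\in L^2_{loc}$. Your approach is instead a self-contained Plancherel computation in the energy variable $\omega=\sqrt{|\xi|^2+m^2}$, and it yields the quantitatively stronger statement $\dot\lambda\in L^2(\R)$ with $\|\dot\lambda\|_{L^2(\R)}\lesssim\|f\|_{H^2}$; the paper's method, relying on the Klein--Gordon trace lemma, only gives local $L^2$ bounds. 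Your weight-counting is correct: after changing variables, the radial weight of each piece is of degree at most three in $\omega$, and its $L^2(m,\infty)$-norm is controlled by $\|f\|_{H^2}$ (though the justifying inequality should really be $r\le\sqrt{1+r^2}$ together with $(r^2+m^2)\le C_m(1+r^2)$ rather than $r\le\sqrt{r^2+m^2}$, which alone does not close the bound). One parenthetical remark is, however, wrong: the $\omega\sin(\omega t)\,I$-piece does \emph{not} get by with $H^{3/2+}$. It carries the same $O(\omega)$ factor as $\alpha\cdot\xi$, and the computation gives a squared $L^2$-norm $\int_0^\infty(r^2+m^2)^{3/2}r^3\int_{S^2}|\hat f(r\sigma)|^2\,d\sigma\,dr$ with large-$r$ weight $\sim r^6$, which matches $\|f\|_{H^2}^2$, not $\|f\|_{H^{3/2+}}^2$. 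Only the $m\beta$-piece, which has no $|\xi|$-growth, needs less regularity; both growing pieces require the full $H^2$. This slip is a side remark only and does not affect the validity of the proof for $f\in H^2$.
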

\begin{proof}
We represent the solution $\psi_f(x,t)$ to  (\ref{CP1}) as 
\begin{equation}\label{ch-u-s}
\psi_f(x,t)=(-i\partial_t-D_m)u(x,t),
\end{equation} 
where vector function $u(x,t)$ is a solution to the free Klein-Gordon equation 
$$
\ddot u(x,t)=(\Delta-m^2)u(x,t), \quad u(x,0)=-D_m^{-1}f(x),\quad \dot u(x,0)=0.
$$
Then the  functions $v(x,t)=\dot u(x,t)$ and $w(x,t)=D_m u(x,t)$ satisfy
$$
\ddot v(x,t)=(\Delta-m^2)v(x,t), \quad v(x,0)=0,\quad \dot v(x,0)=D_m f(x),
$$
$$
\ddot w(x,t)=(\Delta-m^2)w(x,t), \quad w(x,0)=-f(x),\quad \dot w(x,0)=0.
$$
It is obvious that $(v(x,0),\dot v(x,0)), (w(x,0),\dot w(x,0))\in H^2(\R^3)\oplus H^1(\R^3)$. Then
applying \cite[Corollary 4.3]{K16}, we obtain
$$
\dot v(0,t),\dot w(0,t)\in L^2_{loc}[0,\infty)\otimes\C^4.
$$
Hence, (\ref{ch-u-s}) implies 
$$
\dot \psi_f(0,t)=-i\ddot u(0,t)-D_m\dot u(0,t)=-i\dot v(0,t)-\dot w(0,t)\in L^2_{loc}[0,\infty)\otimes\C^4.
$$
\end{proof}
Now we consider the free Dirac equation with initial data $eg(x)$ with arbitrary  $e\in \C^4$,
\begin{equation}\label{CP2}
i\dot \psi_{eg}(x,t)=D_m \psi_{eg}(x,t),\quad \psi_{eg}(x,0)=eg(x),
\end{equation}
 and obtain  explicit   formula  for the solution  $\psi_{eg}(x,t)$.  Note that the function
\begin{equation}\label{fi}
\phi_{eg}(x,t):=\psi_{eg}(x,t)-eg(x)
\end{equation}
satisfies 
\[
i\dot\phi_{eg}(x,t)=D_m \phi_{eg}(x,t)+D_m^{-1}e\delta(x),\quad \phi_{eg}(x,0)=0,
\]
since $D_m eg(x)=D_m^{-1}D_m^2 eg=D_m^{-1}e\delta(x)$. 
Similarly to (\ref{ch-u-s}), we  represent  $\phi_{eg}(x,t)$ as
\begin{equation}\label{chw}
\phi_{eg}(x,t)= (-i\partial_t-D_m)D_m^{-1}e\gamma(x,t)=-iD_m^{-1}e\dot\gamma(x,t)-e\gamma(x,t),
\end{equation}
where
\begin{equation}\label{gamma}
\gamma(x,t)=\frac{\theta(t-|x|)}{4\pi|x|}
-\frac{m}{4\pi}\int_0^t\frac{\theta(s-|x|)J_1(m\sqrt{s^2-|x|^2})}
{\sqrt{s^2-|x|^2}}ds
\end{equation}
is the solution to the Klein-Gordon equation
\begin{equation}\label{gamma-eq}
\ddot\gamma(x,t)=(\Delta-m^2)\gamma(x,t)+\delta(x),\quad \gamma(x,0)=0,\quad \dot\gamma(x,0)=0.
\end{equation}
Here $\theta$ is the Heaviside function and $J_1$ is the Bessel function of the first order.
Finally,  (\ref{fi}) and (\ref{chw}) imply
\begin{equation}\label{rw}
\psi_{eg}(x,t)=eg(x)-e\gamma(x,t)-iD_m^{-1}e\dot\gamma(x,t).
\end{equation}
\begin{lemma}\label{lim_lam}
For any $t>0$ there exists 
\begin{equation}\label{mu}
\mu(t):=\lim_{\ve \to 0+}\lim_{x \to 0} K_m^{\ve}(g(x)-\gamma(x,t))=\lim_{x \to 0}(g(x)-\gamma(x,t))
=-\frac{m}{4\pi}+\frac{m}{4\pi}\int_0^t\frac{J_1(ms)}{s}ds.
\end{equation}
The function $\mu(t)$ is continuous  for $t>0$, and there exists 
$$
\mu(0)=\lim_{t \to 0}\mu(t)=-\frac{m}{4\pi}.
$$
\end{lemma}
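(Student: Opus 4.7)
I would first establish the straightforward pointwise limit $\lim_{x\to 0}(g(x)-\gamma(x,t))$, and then show separately that the $K_m^\ve$-regularization leaves this value unchanged in the limit $\ve\to 0^+$.

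For $0<|x|<t$ both Heaviside functions in (\ref{gamma}) equal $1$, so combined with (\ref{Green}) the $1/(4\pi|x|)$ singularities cancel and
\[
g(x)-\gamma(x,t)=\frac{e^{-m|x|}-1}{4\pi|x|}+\frac{m}{4\pi}\int_{|x|}^t \frac{J_1(m\sqrt{s^2-|x|^2})}{\sqrt{s^2-|x|^2}}\,ds.
\]
The first summand tends to $-m/(4\pi)$ by Taylor expansion. For the integral, the integrand is uniformly bounded on $[0,t]$ because $y\mapsto J_1(y)/y$ extends continuously to $[0,\infty)$ (value $1/2$ at $y=0$, and decaying like $y^{-3/2}$ at infinity), so dominated convergence yields the limit $\int_0^t J_1(ms)/s\,ds$. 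Hence the explicit formula for $\mu(t)$ holds; in particular $h(\cdot,t):=g-\gamma(\cdot,t)$ extends continuously at $x=0$ and belongs to $L^\infty(\R^3)$ (note that $\gamma(\cdot,t)\equiv 0$ for $|x|>t$).

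To commute $K_m^\ve$ with the limit $x\to 0$, I would view $K_m^\ve$ as convolution with the nonnegative Bessel kernel $G_{2\ve}$ of order $2\ve$, which satisfies $\|G_{2\ve}\|_{L^1}=m^{-2\ve}$ and, via the subordination representation
\[
G_{2\ve}(y)=\frac{1}{\Gamma(\ve)}\int_0^\infty (4\pi s)^{-3/2}\,e^{-m^2 s-|y|^2/(4s)}\,s^{\ve-1}\,ds
\]
together with $1/\Gamma(\ve)=O(\ve)$ and Gaussian tail estimates, enjoys the approximate-identity property $\int_{|y|\ge\delta}G_{2\ve}(y)\,dy\to 0$ for every $\delta>0$. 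Since $G_{2\ve}\in L^1$ and $h(\cdot,t)\in L^\infty$, the convolution $K_m^\ve h=G_{2\ve}*h$ is continuous on $\R^3$, so
\[
\lim_{x\to 0}(K_m^\ve h)(x,t)=(K_m^\ve h)(0,t)=\int G_{2\ve}(y)\,h(y,t)\,dy.
\]
The standard splitting $\int_{|y|<\delta}+\int_{|y|\ge\delta}$, using continuity of $h(\cdot,t)$ at the origin together with $\|G_{2\ve}\|_{L^1}\to 1$, then gives $\int G_{2\ve}(y)h(y,t)\,dy\to h(0,t)=\mu(t)$ as $\ve\to 0^+$, which is the equality of the two limits in (\ref{mu}). (As a sanity check, Fourier-transforming (\ref{gamma-eq}) yields $\hat\gamma(\xi,t)=(1-\cos(\omega t))/\omega^2$ with $\omega:=\sqrt{|\xi|^2+m^2}$, hence $\hat h(\xi,t)=\cos(\omega t)/\omega^2$.)

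Continuity of $\mu$ on $(0,\infty)$ and $\mu(0^+)=-m/(4\pi)$ are then immediate from the explicit formula, using continuity in $t$ and the vanishing of $\int_0^t J_1(ms)/s\,ds$ at $t=0^+$. The genuinely delicate step is verifying the approximate-identity property of $G_{2\ve}$ as $\ve\to 0^+$; the remainder of the proof is dominated convergence and direct substitution.
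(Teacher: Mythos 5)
Your proof is correct, but it takes a genuinely different route from the paper's. The paper works entirely in Fourier space: it computes $\widehat{g}-\widehat{\gamma}=\cos(t\sqrt{\xi^2+m^2})/(\xi^2+m^2)$ (matching your sanity check) and then performs the three-way split
\[
\frac{\cos t\sqrt{\xi^2+m^2}-\cos|\xi|t}{\xi^2}-\frac{m^2\cos t\sqrt{\xi^2+m^2}}{\xi^2(\xi^2+m^2)}+\frac{\cos|\xi|t}{\xi^2},
\]
justifying the interchange of limits term by term. The delicate piece in the paper is $\cos|\xi|t/\xi^2$, for which both sides of the exchanged limit are individually zero, and the paper evaluates the inner limit $\lim_{x\to 0}K_m^\ve\mathcal{F}^{-1}[\cos|\xi|t/\xi^2]$ explicitly as a modified Bessel ${\bf K}_{\ve-1/2}(mt)/\Gamma(\ve)$ and uses $1/\Gamma(\ve)\to 0$. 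You instead work in physical space: you show $h(\cdot,t)=g-\gamma(\cdot,t)$ is globally bounded and continuous at the origin, realize $K_m^\ve$ as convolution against the Bessel potential $G_{2\ve}\ge 0$ with $\|G_{2\ve}\|_{L^1}=m^{-2\ve}\to 1$, and then invoke the approximate-identity property $\int_{|y|\ge\delta}G_{2\ve}\to 0$ (proved via the subordination formula and $1/\Gamma(\ve)=O(\ve)$, which plays the same technical role here as in the paper's Step ii). Your approach is more conceptual and makes the mechanism of the limit exchange transparent — $K_m^\ve$ simply averages a bounded function that is continuous at $0$ against a concentrating nonnegative kernel — whereas the paper's computation is more hands-on; the paper's Fourier decomposition, however, is recycled almost verbatim in Lemma \ref{Phi_lim}, where the source term makes the physical-space picture less clean, so the authors likely chose Fourier space for uniformity. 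One small point worth spelling out if you write this up: $h(\cdot,t)$ has a jump across the light cone $|x|=t$, so you should note explicitly that continuity is only needed at $x=0$ and global boundedness (not continuity) elsewhere, which your splitting $\int_{|y|<\delta}+\int_{|y|\ge\delta}$ indeed only requires.
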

\begin{proof}
Applying the Fourier transform $\hat f(\xi)={\cal F}_{x\to \xi} f(x)$, we get  
\begin{eqnarray}\nonumber
\hat g(\xi,t)-\hat\gamma(\xi,t)&=&\frac{1}{\xi^2+m^2}-\int_0^t\frac{\sin s\sqrt{\xi^2+m^2}}{\sqrt{\xi^2+m^2}} ds
=\frac{\cos t\sqrt{\xi^2+m^2}}{\xi^2+m^2} \\
\label{np01}
&=&\frac{\cos t\sqrt{\xi^2+m^2}-\cos |\xi|t}{\xi^2}-\frac{m^2\cos t\sqrt{\xi^2+m^2}}{\xi^2(\xi^2+m^2)}+\frac {\cos |\xi|t}{\xi^2},\quad t>0.
\end{eqnarray}
Then  for (\ref{mu}) it suffices to justify the following permutation of the  limits:
\begin{eqnarray}\nonumber
&&\lim_{\ve \to 0+}\lim_{x \to 0}K_m^{\ve}{\cal F}^{-1}_{\xi\to x}
\Big(\frac{\cos t\sqrt{\xi^2+m^2}-\cos |\xi|t}{\xi^2}-\frac{m^2\cos t\sqrt{\xi^2+m^2}}{\xi^2(\xi^2+m^2)}+\frac {\cos |\xi|t}{\xi^2}\Big)\\
\label{np1}
&&=\lim_{x \to 0}{\cal F}^{-1}_{\xi\to x}\Big(\frac{\cos t\sqrt{\xi^2+m^2}-\cos |\xi|t}{\xi^2}
-\frac{m^2\cos t\sqrt{\xi^2+m^2}}{\xi^2(\xi^2+m^2)}+\frac {\cos |\xi|t}{\xi^2}\Big),\quad t>0.
\end{eqnarray}
We will do it  for each term in (\ref{np1}) separately.\\
{\it Step i)} It is obvious that
\begin{eqnarray}\nonumber
\lim_{\ve\to 0+}\lim_{x\to 0}K_m^{\ve}{\cal F}^{-1}_{\xi\to x}\frac{m^2\cos t\sqrt{\xi^2+m^2}}{\xi^2(\xi^2+m^2)}
=\lim_{x\to 0} {\cal F}^{-1}_{\xi\to x}\frac{m^2\cos t\sqrt{\xi^2+m^2}}{\xi^2(\xi^2+m^2)}=\frac{m^2}{2\pi^2}\int_0^\infty \frac{\cos t\sqrt{r^2+m^2}}{r^2+m^2}dr.
\end{eqnarray}
{\it Step ii)} Now we prove that
$$
\lim_{\ve \to 0+}\lim_{x \to 0} K_m^{\ve}{\cal F}^{-1}_{\xi\to x}\frac {\cos |\xi|t}{\xi^2}=\lim_{x \to 0}{\cal F}^{-1}_{\xi\to x}\frac {\cos |\xi|t}{\xi^2}=0,\quad t>0.
$$
First, note that
$$
\lim_{x \to 0}{\cal F}^{-1}_{\xi\to x}\frac {\cos |\xi|t}{\xi^2}
=\lim_{\rho \to 0+}\frac{1}{2\pi^2}\int_0^\infty\frac{\sin r\rho\cos tr}{r\rho}dr
=\lim_{ \rho\to 0+}\frac{1}{4\pi^2\rho}\int_0^\infty\frac{\sin r(t+\rho)-\sin r( t-\rho)}{r}dr=0,\quad \rho=|x|,
$$
since
$$
\int_0^\infty\frac{\sin r\alpha}{r}dr=\int_0^\infty\frac{\sin u}{u}du,\quad\alpha>0.
$$
On the other hand, 
\begin{eqnarray}\nonumber
\!\!\!\!\!\!\!\!\!\!\!\!\!\!\! \!\!\!\!\! \!\!\!\!\! \!\!\!\!\! 
 \lim_{x \to 0} K_m^{\ve}{\cal F}^{-1}_{\xi\to x}\frac {\cos |\xi|t}{\xi^2}&=&\lim_{x\to 0}{\cal F}^{-1}_{\xi\to x}\frac {\cos |\xi|t}{\xi^2(\xi^2+m^2)^{\ve}}
=\lim_{\rho\to 0+}\frac{1}{2\pi^2\rho}\int\limits_0^\infty\frac{\sin r\rho\cos tr}{r(r^2+m^2)^\ve}dr\\
\nonumber 
&=&\frac{1}{2\pi^2}\int\limits_0^\infty\frac{\cos tr\,dr}{(r^2+m^2)^\ve}=\frac{1}{2\pi^2}\Big(\frac{2m}t\Big)^{\frac 12-\ve}\frac{\sqrt{\pi}{\bf K}_{\ve-\frac 12}(mt)} {\Gamma(\ve)}
\end{eqnarray}
by \cite [Formula  1.3.(7)]{E}. Here ${\bf K}_\nu$  is the modified Bessel function, and $\Gamma$ is the gamma function.
One can justify the last limit, splitting the integral into a sum of  integrals over the intervals
$[0,1]$ and  $[1,\infty)$, and integrating by parts in the second one.
Therefore,
\begin{eqnarray}\nonumber
\lim_{\ve \to 0+}\lim_{x \to 0} K_m^{\ve}{\cal F}^{-1}_{\xi\to x}\frac {\cos |\xi|t}{\xi^2}
=\lim_{\ve\to 0+}\frac{1}{2\pi^2}\Big(\frac{2m}t\Big)^{\frac 12-\ve}\frac{\sqrt{\pi}{\bf K}_{\ve-\frac 12}(mt)} {\Gamma(\ve)}=0,\quad t>0,
\end{eqnarray}
since  
$$
\lim_{\ve\to 0+}\frac{1} {\Gamma(\ve)}=0,\qquad
\lim_{\ve\to 0+}{\bf K}_{\ve-\frac 12}(mt)= {\bf K}_{-\frac 12}(mt)=\Big(\frac{\pi}{2mt}\Big)^{\frac 12}e^{-mt}
$$
by \cite[Formulas 5.7.1 and   10.39.2]{O}. \\
{\it Step iii)} It remains to check that
\begin{equation}\label{npf}
\lim_{\ve\to 0+}\lim_{x\to 0} K_m^{\ve}{\cal F}^{-1}_{\xi\to x}\frac{\cos t\sqrt{\xi^2+m^2}-\cos t|\xi|}{\xi^2}
=\lim_{x\to 0}{\cal F}^{-1}_{\xi\to x}\frac{\cos t\sqrt{\xi^2+m^2}-\cos t|\xi|}{\xi^2},\quad t>0.
\end{equation}
One has
$$
K_m^{\ve}{\cal F}^{-1}_{\xi\to x}\frac{\cos t\sqrt{\xi^2\!+m^2}\!-\!\cos t|\xi|}{\xi^2}
=\frac{1}{2\pi^2}\!\int\limits_{0}^{2m} \frac{(\cos t\sqrt{r^2\!+m^2}\!-\cos tr)\sin\rho r}{\rho r(r^2+m^2)^{\ve}}dr
+\sum_{\pm}\frac{1}{4\pi^2}\!\int\limits_{2m}^{\infty}\frac{(e^{\pm it\sqrt{r^2+m^2}}\!-e^{\pm itr})\sin\rho r}{\rho r(r^2+m^2)^{\ve}}dr,
$$
where $\rho=|x|$.  Evidently,
$$
\lim_{\ve\to 0+}\lim_{\rho\to 0+} \int\limits_{0}^{2m} \frac{(\cos t\sqrt{r^2\!+m^2}\!-\cos tr)\sin\rho r}{\rho r(r^2+m^2)^{\ve}}dr
=\lim_{\rho\to 0+} \int\limits_{0}^{2m} \frac{(\cos t\sqrt{r^2\!+m^2}\!-\cos tr)\sin\rho r}{\rho r}dr.
$$
Hence, (\ref{npf}) will follow from
\begin{equation}\label{npf+}
\lim_{\ve\to 0+}\lim_{\rho\to 0+}\int\limits_{2m}^{\infty}\frac{(e^{\pm it\sqrt{r^2+m^2}}\!-e^{\pm itr})\sin\rho r}{\rho r(r^2+m^2)^{\ve}}dr
=\lim_{\rho\to 0+}\int\limits_{2m}^{\infty}\frac{(e^{\pm it\sqrt{r^2+m^2}}\!-e^{\pm itr})\sin\rho r}{\rho r}dr.
\end{equation}
Note, that
$$
e^{\pm it\sqrt{r^2+m^2}}-e^{\pm itr}=e^{\pm itr}\Big(\pm\frac{itm^2}{2r}+R_{\pm}(r,t)\Big),\quad r\ge 2m,\quad t>0,
$$
where
$$
|R_{\pm}(r,t)|\le C(m)(1+t)^2/r^2,\qquad |r|\ge 2m.
$$
The last estimate implies 
\begin{equation}\label{I0}
\lim_{\ve\to 0+}\lim_{\rho\to 0+}\int\limits_{2m}^{\infty}\frac{e^{\pm itr}R_{\pm}(r,t)\sin\rho r}{\rho r(r^2+m^2)^{\ve}}dr
=\lim_{\rho\to 0+}\int\limits_{2m}^{\infty}\frac{e^{\pm itr}R_{\pm}(r,t)\sin\rho r}{\rho r}dr=\int\limits_{2m}^{\infty}e^{\pm itr}R_{\pm}(r,t)\,dr.
\end{equation}
Moreover,
\begin{equation}\label{I2}
\lim_{\ve\to 0+}\lim_{\rho\to 0+}\int\limits_{2m}^\infty \frac{e^{\pm itr}\sin r\rho\,dr}{\rho r^2(r^2+m^2)^{\ve}}
=\lim_{\rho\to 0+}\int\limits_{2m}^\infty \frac{e^{\pm itr}\sin r\rho\,dr}{\rho r^2}=\int\limits_{2m}^\infty \frac{e^{\pm itr}\,dr}{r},\quad t>0,
\end{equation}
that easily follows by means of integration by parts.  Finally,  (\ref{I0}) --(\ref{I2}) imply (\ref{npf+}).
\end{proof}
\section{Linear Dirac equation with  sources}
\label{dD-sect}
\subsection {Dirac equation with the  source $D_m^{-1}\zeta(t)\delta(x)$}
For arbitrary  $\zeta(t)\in C^1 [0,\infty)\otimes\C^4$,   consider  the  equation 
\begin{equation}\label{CP3}
i\dot\psi_S(x,t)= D_m \psi_S(x,t) -D^{-1}_m\zeta(t)\delta(x), \quad \psi_S(x,0) = 0.
\end{equation} 
It is easy to verify that 
$$
\psi_S(x,t):=(i\pa_t+D_m)D_m^{-1}\varphi_S(x,t),
$$
where 
\begin{equation}\label{vp1}
\varphi_S(x,t):= \frac{\theta(t-|x|)}{4\pi|x|}\zeta(t-|x|)-\frac{m}{4\pi}
\int_0^t\frac{\theta(s-|x|)J_1(m\sqrt{s^2-|x|^2})}{\sqrt{s^2-|x|^2}}\zeta(t-s)ds
\end{equation}
 is the solution to the  Klein-Gordon equation with $\delta$-like source:
 $$
\ddot\varphi_S(x,t)=(\Delta-m^2)\varphi_S(x,t)+\zeta(t)\delta(x), \quad \varphi_S(x,0) = 0,\quad\dot\varphi_S(x,0)=0.
$$
Hence
\begin{equation}\label{vp}
\psi_S(x,t)= \varphi_S(x,t)+iD_m^{-1}\zeta_0\dot\gamma(x,t)+ iD_m^{-1}p_S(x,t),
\end{equation}
where $\gamma(x,t)$ is defined in (\ref{gamma}), and
\begin{equation}\label{p}
p_S(x,t):=\frac{\theta(t-|x|)\dot\zeta(t-|x|)}{4\pi|x|}
-\frac{m}{4\pi}\int_0^t\frac{\theta(s-|x|)J_1(m\sqrt{s^2-|x|^2})}{\sqrt{s^2-|x|^2}}\dot\zeta(t-s)ds.
\end{equation}
\begin{lemma}\label{Phi_lim}
For any  $\zeta(t) \in C^1 [0,\infty)\otimes\C^4$ such that $\ddot\zeta(t)\in L^2_{loc}[0,\infty)\otimes\C^4$, one has
\begin{eqnarray}\nonumber
\lim_{\ve\to 0+}\lim_{x\to 0}K_m^{\ve}\left(\varphi_S(x,t)-\zeta(t)g(x)\right)&=&\lim_{x\to 0} (\varphi_S(x,t)-\zeta(t)g(x))\\
\label{dif1}
&=&\frac{1}{4\pi}\Big(m\zeta(t)-\dot\zeta(t)-m\int_0^t\frac{J_1(ms)}{s}\zeta(t-s)ds\Big),\quad t>0.
\end{eqnarray}
\end{lemma}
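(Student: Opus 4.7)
The pointwise limit $\lim_{x\to 0}(\varphi_S(x,t)-\zeta(t)g(x))$ is direct from (\ref{vp1}) and $g(x)=e^{-m|x|}/(4\pi|x|)$. Writing
\[
\varphi_S(x,t)-\zeta(t)g(x) = \frac{\theta(t-|x|)[\zeta(t-|x|)-\zeta(t)e^{-m|x|}]}{4\pi|x|} - \frac{m}{4\pi}\int_0^t \frac{\theta(s-|x|)J_1(m\sqrt{s^2-|x|^2})}{\sqrt{s^2-|x|^2}}\zeta(t-s)\,ds,
\]
the expansions $\zeta(t-|x|)=\zeta(t)-|x|\dot\zeta(t)+o(|x|)$ (from $\zeta\in C^1$) and $e^{-m|x|}=1-m|x|+O(|x|^2)$ give the boundary value $(m\zeta(t)-\dot\zeta(t))/(4\pi)$, while dominated convergence in the second piece (using $J_1(ms)/s$ bounded near $0$ and $\zeta$ continuous) produces $-\frac{m}{4\pi}\int_0^t J_1(ms)\zeta(t-s)\,ds/s$. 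Adding these yields the claimed right-hand side.

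For the $K_m^\ve$ version I would use the decomposition
\[
\varphi_S(x,t)-\zeta(t)g(x) = \zeta(t)[\gamma(x,t)-g(x)]+R(x,t),
\]
with
\[
R(x,t)=\frac{\theta(t-|x|)[\zeta(t-|x|)-\zeta(t)]}{4\pi|x|} - \frac{m}{4\pi}\int_0^t\frac{\theta(s-|x|)J_1(m\sqrt{s^2-|x|^2})}{\sqrt{s^2-|x|^2}}[\zeta(t-s)-\zeta(t)]\,ds.
\]
Lemma \ref{lim_lam} handles the first summand (returning $-\zeta(t)\mu(t)$ in the iterated limit), so the problem reduces to the interchange $\lim_{\ve\to0+}\lim_{x\to0}K_m^\ve R(x,t)=\lim_{x\to0}R(x,t)$. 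Passing to Fourier with $\omega=\sqrt{\xi^2+m^2}$, from $\hat\varphi_S(\xi,t)=\int_0^t\sin((t-s)\omega)\zeta(s)\,ds/\omega$ and one integration by parts in $s$ one obtains
\[
\hat R(\xi,t)=\frac{[\zeta(t)-\zeta(0)]\cos(t\omega)}{\omega^2}-\int_0^t\frac{\cos((t-s)\omega)}{\omega^2}\dot\zeta(s)\,ds,
\]
and a second integration by parts (essential use of $\ddot\zeta\in L^2_{loc}$) produces the decay-improved form
\[
\hat R(\xi,t)=\frac{[\zeta(t)-\zeta(0)]\cos(t\omega)}{\omega^2}-\frac{\dot\zeta(0)\sin(t\omega)}{\omega^3}-\int_0^t\frac{\sin((t-s)\omega)}{\omega^3}\ddot\zeta(s)\,ds.
\]
The two boundary terms fall under the Step i)--iii) analysis of Lemma \ref{lim_lam} (the $\sin(t\omega)/\omega^3$ piece is actually easier thanks to the extra $1/\omega$). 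For the $\ddot\zeta$-integral, Fubini is valid for each fixed $\ve>0$ (because $\int d^3\xi/\omega^{3+2\ve}<\infty$ and $\ddot\zeta\in L^1[0,t]$ by Cauchy--Schwarz), so one can exchange the $x\to0$, the $\xi$-integration, and the $s$-integration; the same Step i)--iii) splitting applied with parameter $\tau=t-s\in[0,t]$ gives pointwise convergence of the integrand in $s$ as $\ve\to0+$, after which dominated convergence carries the outer $\ve$-limit under the $s$-integral.

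The main obstacle is carrying out this last interchange uniformly for $s$ near $t$, where $\tau=t-s\to0$. A naive attempt based on the once-integrated form $\cos((t-s)\omega)/\omega^2$ would fail: for $\ve<1/2$ the quantity $(K_m^\ve g)(0)$ is infinite, hence $\int\cos((t-s)\omega)\omega^{-2-2\ve}d^3\xi$ blows up as $\tau\to0$, and no $\ve$-uniform majorant is available on a neighbourhood of $s=t$. The second integration by parts---precisely the step that invokes the hypothesis $\ddot\zeta\in L^2_{loc}$---replaces the singular kernel $\cos((t-s)\omega)/\omega^2$ by $\sin((t-s)\omega)/\omega^3$, which vanishes at $\tau=0$; this yields the majorant uniform in $\ve$ that legitimates the interchange and, after Fubini and a rearrangement using the formula for $\mu$ from Lemma \ref{lim_lam}, delivers the asserted identity.
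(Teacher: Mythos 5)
Your proof is correct and follows essentially the same route as the paper: Fourier-transform $\varphi_S-\zeta(t)g$, integrate by parts twice to bring in $\ddot\zeta$ against the kernel $\sin(s\omega)/\omega^3$, dispose of the boundary terms $\cos(t\omega)/\omega^2$ and $\sin(t\omega)/\omega^3$ via Lemma~\ref{lim_lam} and partial integration, and treat the $\ddot\zeta$-convolution by the same three-way kernel split used in the paper's Steps~i)--iii). Your intermediate decomposition $\varphi_S-\zeta(t)g=\zeta(t)(\gamma-g)+R$ is a cosmetic rearrangement of the paper's terms, and your closing explanation of why the second integration by parts is indispensable (the $\cos$-kernel at $s=t$ gives no $\ve$-uniform majorant, whereas $\sin((t-s)\omega)/\omega^3$ vanishes there) is a correct and useful elaboration of a point the paper leaves implicit.
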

\begin{proof}
The Fourier transform of $\varphi_S(x,t)-\zeta(t)g(x)$ for any $t>0$ reads
$$
\hat \varphi_S(k,t)-\zeta(t)\hat g(k)=\int_0^t\frac{\sin s\sqrt{\xi^2+m^2}}{\sqrt{\xi^2+m^2}}\zeta(t-s)ds-\frac{\zeta(t)}{\xi^2+m^2}
=-\frac{\cos t\sqrt{\xi^2+m^2}}{\xi^2+m^2}\zeta(0)-\int_0^t \frac{\cos s\sqrt{\xi^2+m^2}}{\xi^2+m^2}\dot\zeta(t-s)ds.
$$
Due to (\ref{np01}) - (\ref{np1}),
$$
\lim_{\ve \to 0+}\lim_{x \to 0}K_m^{\ve}{\cal F}^{-1}_{\xi\to x}\frac{\cos t\sqrt{\xi^2+m^2}}{\xi^2+m^2}
=\lim_{x \to 0}{\cal F}^{-1}_{\xi\to x} \frac{\cos t\sqrt{\xi^2+m^2}}{\xi^2+m^2}.
$$
Hence, it remains to prove that
\begin{equation}\label{np9}
\lim_{\ve \to 0+}\lim_{x \to 0}K_m^{\ve}{\cal F}^{-1}_{\xi\to x}\int_0^t \frac{\cos s\sqrt{\xi^2+m^2}}{\xi^2+m^2}\dot\zeta(t-s)ds
=\lim_{x \to 0}{\cal F}^{-1}_{\xi\to x} \int_0^t \frac{\cos s\sqrt{\xi^2+m^2}}{\xi^2+m^2}\dot\zeta(t-s)ds.
\end{equation}
Integrating by parts, we obtain
$$
 \int_0^t \frac{\cos s\sqrt{\xi^2+m^2}}{\xi^2+m^2}\dot\zeta(t-s)ds=\frac {\sin t\sqrt{\xi^2+m^2}}{(\xi^2+m^2)\sqrt{\xi^2+m^2}}\dot\zeta(0)
 + \int_0^t \frac{\sin s\sqrt{\xi^2+m^2}}{(\xi^2+m^2)\sqrt{\xi^2+m^2}}\ddot\zeta(t-s)ds.
 $$
Note,  that
 $$
 \lim_{\ve \to 0+}\lim_{x \to 0}K_m^{\ve}{\cal F}^{-1}_{\xi\to x}\frac{\sin t\sqrt{\xi^2+m^2}}{(\xi^2+m^2)\sqrt{\xi^2+m^2}} 
 =\lim_{x \to 0}{\cal F}^{-1}_{\xi\to x}\frac{\sin t\sqrt{\xi^2+m^2}}{(\xi^2+m^2)\sqrt{\xi^2+m^2}},\quad t>0,
 $$
 or equivalently,
 $$
  \lim_{\ve \to 0+}\lim_{\rho \to 0+}\frac{1}{2\pi^2}\int\limits_0^\infty\frac{\sin r\rho\sin t\sqrt{r^2+m^2}\, r dr}{\rho(r^2+m^2)^{1+\ve}\sqrt{r^2+m^2}}
 =\lim_{\rho \to 0+}\frac{1}{2\pi^2}\int\limits_0^\infty\frac{\sin r\rho\sin t\sqrt{r^2+m^2}\, r dr}{\rho(r^2+m^2)\sqrt{r^2+m^2}},\quad\rho=|x|,\quad t>0.
 $$
 One can easily justify this  by partial integration.
Hence,  for (\ref{np9}), it remains to prove that
\begin{equation}\label{use}
 \lim_{\ve \to 0+}\lim_{x \to 0}K_m^{\ve}{\cal F}^{-1}_{\xi\to x}\int_0^t \frac{\sin s\sqrt{\xi^2+m^2}}{(\xi^2+m^2)\sqrt{\xi^2+m^2}}\ddot\zeta(t-s)ds
  =\lim_{x \to 0}{\cal F}^{-1}_{\xi\to x}\int_0^t \frac{\sin s\sqrt{\xi^2+m^2}}{(\xi^2+m^2)\sqrt{\xi^2+m^2}}\ddot\zeta(t-s)ds,\quad t>0.
\end{equation}
We split the integrand in (\ref{use}) as
\begin{equation}\label{np10}
\frac{\sin s\sqrt{\xi^2\!+m^2}}{(\xi^2\!+m^2)\sqrt{\xi^2\!+m^2}} \ddot\zeta(t\!-s)
=\Big(\frac{\sin s\sqrt{\xi^2\!+m^2}-\sin s|\xi|}{\xi^2\sqrt{\xi^2\!+m^2}}-\frac{m^2\sin s\sqrt{\xi^2\!+m^2}}{\xi^2(\xi^2\!+m^2)\sqrt{\xi^2\!+m^2}}
+\frac {\sin s|\xi|}{\xi^2\sqrt{\xi^2\!+m^2}}\Big)\ddot\zeta(t-\!s),
\end{equation}
and justify the permutation of the  limits (\ref{use}) for integrals of each terms in the RHS of (\ref{np10}) separately.
\medskip\\
{\it Step i)}
First, consider the integral of the second term of (\ref{np10}). By the  Fubini  theorem 
\begin{eqnarray}\nonumber
K_m^{\ve}{\cal F}^{-1}_{\xi\to x}\int\limits_0^t\frac{m^2\sin s\sqrt{\xi^2+m^2}}{\xi^2(\xi^2+m^2)\sqrt{\xi^2+m^2}}\ddot\zeta(t-s)ds
&=&\frac{m^2}{(2\pi)^3}\int\limits_{\R^3} e^{-i\xi\cdot x}\Big(\int\limits_0^t\frac{\sin s\sqrt{\xi^2+m^2}}{\xi^2(\xi^2+m^2)^{1+\ve}\sqrt{\xi^2+m^2}}\ddot\zeta(t-s)ds\Big)d^3\xi\\
\nonumber
&=&\frac{m^2}{(2\pi)^3}\int\limits_0^t \Big(\int\limits_{\R^3}\frac{e^{-i\xi\cdot x}\sin s\sqrt{\xi^2+m^2}}{\xi^2(\xi^2+m^2)^{1+\ve}\sqrt{\xi^2+m^2}}d^3\xi\Big)\ddot\zeta(t-s)ds,\quad\ve\ge 0. 
\end{eqnarray}
Hence,
$$
\lim_{\ve\to 0+}\lim_{x\to 0}K_m^{\ve}{\cal F}^{-1}_{\xi\to x}\int\limits_0^t\frac{m^2\sin s\sqrt{\xi^2+m^2}}{\xi^2(\xi^2+m^2)\sqrt{\xi^2+m^2}}\ddot\zeta(t-s)ds
=\lim_{x \to 0}{\cal F}^{-1}_{\xi\to x}\int_0^t\frac{m^2\sin s\sqrt{\xi^2+m^2}}{\xi^2(\xi^2+m^2)\sqrt{\xi^2+m^2}}\ddot\zeta(t-s)ds 
$$
by the  Lebesgue theorem.\\
{\it Step ii)} Similarly, for the integral of the first term, we obtain
$$
\lim_{\ve\to 0+}\lim_{x\to 0}K_m^{\ve}{\cal F}^{-1}_{\xi\to x}\int_0^t\frac{\sin s\sqrt{\xi^2+m^2}-\sin s|\xi|}{\xi^2\sqrt{\xi^2+m^2}}\ddot\zeta(t-s)ds
=\lim_{x \to 0}{\cal F}^{-1}_{\xi\to x}\int_0^t \frac{\sin s\sqrt{\xi^2+m^2}-\sin s|\xi|}{\xi^2\sqrt{\xi^2+m^2}}\ddot\zeta(t-s)ds 
$$
since 
$$
\sin t\sqrt{\xi^2+m^2}-\sin t|\xi|=\frac{1}{2i}[(e^{it\sqrt{\xi^2+m^2}}-e^{it|\xi|})-(e^{-it\sqrt{\xi^2+m^2}}-e^{-it|\xi|})]\sim t|\xi|^{-1},\quad |\xi|\to\infty.
$$
{\it Step iii)} It remains to consider the third term of (\ref{np10}) and prove that
\begin{equation}\label{use1}
\lim_{\ve\to 0+}\lim_{x\to 0}K_m^{\ve}{\cal F}^{-1}_{\xi\to x}\int_0^t\frac {\sin s|\xi|}{\xi^2\sqrt{\xi^2+m^2}}\ddot\zeta(t-s)ds
=\lim_{x\to 0}\int_0^t\frac {\sin s|\xi|}{\xi^2\sqrt{\xi^2+m^2}}\ddot\zeta(t-s)ds.
\end{equation}
Applying the  Fubini and the Lebesgue theorems, we get
\begin{eqnarray}\nonumber
&&\lim_{\ve\to 0+}\lim_{x\to 0}K_m^{\ve}{\cal F}^{-1}_{\xi\to x}\int_0^t\frac {\sin s|\xi|}{\xi^2\sqrt{\xi^2+m^2}}\ddot\zeta(t-s)ds
=\frac{1}{(2\pi)^3}\lim_{\ve\to 0+}\lim_{x\to 0} \int_{\R^3}e^{-i\xi\cdot x}\Big(\int_0^t\frac {\sin s|\xi|}{\xi^2(\xi^2+m^2)^{\frac 12+\ve}}\ddot\zeta(t-s)ds\Big)d^3\xi\\
\nonumber
&&=\frac{1}{2\pi^2}\lim_{\ve\to 0+}\int_0^t\Big(\int_0^t\frac {\sin sr}{(r^2+m^2)^{\frac 12+\ve}}dr\Big)\ddot\zeta(t-s)ds
=\frac{\sqrt\pi}{4\pi^2}\Gamma(\frac 12)\lim_{\ve\to 0+}\int_0^ts^{\ve}[{\rm I}_{\ve}(ms)-{\bf L}_{-\ve}(ms)]\ddot\zeta(t-s)ds\\
\label{eqq1}
&&=\frac{\sqrt\pi}{4\pi^2}\int_0^t[{\rm I}_{0}(ms)-{\bf L}_{0}(ms)]\ddot\zeta(t-s)ds
\end{eqnarray}
by  \cite [Formula  2.3.(6)]{E}.  Here ${\rm I}_{\ve}(z)$  is the modified Bessel function, and   ${\bf L}_{-\ve}(z)$ is the modified Struve function, satisfying
\begin{equation}\label{IL}
 {\rm I}_{\ve}(z)\sim (\frac 12 z)^{\ve}/\Gamma(\ve+1),\quad  {\bf L}_{-\ve}(z)\sim (\frac 12 z)^{-\ve+1}, \quad z\to 0,
\end{equation}
due to  formulas (10.30.1) and (11.2.2) of \cite{O}.
On the other hand, the  Fubini theorem implies
\begin{eqnarray}\nonumber
\!\!\!\!\!\!\!\!\!\!\!\!\!\!\!\lim_{x\to 0}{\cal F}^{-1}_{\xi\to x}\int_0^t\frac {\sin s|\xi|}{\xi^2\sqrt{\xi^2+m^2}}\ddot\zeta(t-s)ds
\!\!\!&=&\!\!\!\lim_{\rho\to 0+}\frac{1}{2\pi^2\rho}\int_0^t\Big(\int_0^\infty \frac{\sin sr\sin\rho r}{r\sqrt{r^2+m^2}}dr \Big)\ddot\zeta(t-s)ds\\
\label{eqq2}
\!\!\!&=&\!\!\!\lim_{\rho\to 0+}\frac{1}{2\pi^2}\int_0^t T_1(s,\rho)\ddot\zeta(t-s)ds
+\lim_{\rho\to 0+}\frac{1}{2\pi^2}\int_0^t T_2(s,\rho)\ddot\zeta(t-s)ds,
\end{eqnarray}
where 
$$
T_1(s,\rho)=\frac 1{\rho}\int_0^{\frac 1s}\frac{\sin sr\sin\rho r}{r\sqrt{r^2+m^2}}dr,\qquad
T_2(s,\rho)=\frac 1{\rho}\int_{\frac 1s}^\infty \frac{\sin sr\sin\rho r}{r\sqrt{r^2+m^2}}dr.
$$
Evidently,
\begin{equation}\label{eqq3}
|T_1(s,\rho)|\le \int_0^{\frac 1s}\frac{sr}{\sqrt{r^2+m^2}}dr\le 1,
\quad s>0,\quad \rho>0.
\end{equation} 
Further, we represent $T_2(s,\rho)$  as 
\[
T_2(s,\rho)=\frac{1}{2\rho}\int\limits_{1/s}^\infty \frac{\cos r(s-\rho)-\cos r(s+\rho)}{r\sqrt{r^2+m^2}}dr
=\frac{|s-\rho|}{\rho}\int\limits_{\frac{|s-\rho|}{s}}^\infty\frac{\cos u\,du}{u\sqrt{u^2+(s-\rho)^2m^2}}
-\frac{s+\rho}{\rho}\int\limits_{\frac{s+\rho}{s}}^\infty\frac{\cos u\,du}{u\sqrt{u^2+(s+\rho)^2m^2}}.
\]
 In the case $0<s\le 2\rho$, we obtain
\begin{equation}\label{eqq4}
|T_2(s,\rho)|\le \frac{|s-\rho|}{\rho} \int\limits_{\frac{|s-\rho|}{s}}^\infty\frac {du}{u^2}
+ \frac{s+\rho}{\rho} \int\limits_{\frac{s+\rho}{s}}^\infty\frac {du}{u^2}
\le \frac{x\to\xi|s-\rho|}{\rho} \frac{s}{|s-\rho|}+\frac{s+\rho}{\rho} \frac{s}{s+\rho}\le 4.
\end{equation}
In the case  $0<\rho\le s/2$, we obtain
\begin{eqnarray}\nonumber
\!\!\!\!\!\!\!\!\!\!\!\!\!\!\!|T_2(s,\rho)|\!\!\!&\le&\!\!\! \frac{s-\rho}{\rho}\int\limits_{\frac{s-\rho}{s}}^{\frac{s+\rho}{s}}\frac{du}{u\sqrt{u^2+(s-\rho)^2m^2}}
+ \frac{s+\rho}{\rho}\int\limits_{\frac{s+\rho}{s}}^{\infty}\Big(\frac{1}{u\sqrt{u^2+(s-\rho)^2m^2}}-\frac{1}{u\sqrt{u^2+(s+\rho)^2m^2}}\Big)du\\
\nonumber
\!\!\!&\le&\!\!\!\frac{s}{\rho}\int\limits_{\frac{s-\rho}{s}}^{\frac{s+\rho}{s}}\frac{du}{u^2}
+\frac{3s}{2\rho}\int\limits_{1}^{\infty}\frac{\sqrt{u^2+(s+\rho)^2m^2}-\sqrt{u^2+(s-\rho)^2m^2}}
{u(s-\rho)(s+\rho)m^2}\,du\\
\label{eqq5}
\!\!\!&\le&\!\!\!\frac{s}{\rho}\Big(\frac{s}{s-\rho}-\frac{s}{s+\rho}\Big)
+\frac{3}{s\rho}\int\limits_{1}^{\infty}\frac{\big[(s+\rho)^2-(s-\rho)^2\big]du}{u(\sqrt{u^2+(s+\rho)^2m^2}+\sqrt{u^2+(s-\rho)^2m^2})}
\le 4+\frac{6}{s\rho}\int\limits_{1}^{\infty}\frac{s\rho}{u^2}\,du\le 10.
\end{eqnarray}
Due to (\ref{eqq3})--(\ref{eqq5}), we can apply the Lebesgue theorem in (\ref{eqq2}) and obtain
$$
\lim_{x\to 0}{\cal F}^{-1}_{\xi\to x}\int_0^t\frac {\sin s|\xi|}{\xi^2\sqrt{\xi^2+m^2}}\ddot\zeta(t-s)ds
=\frac{1}{2\pi^2} \int_0^t\Big(\int_0^\infty\frac {\sin sr\, dr}{\sqrt{r^2+m^2}}\Big)\ddot\zeta(t-s)ds
=\frac{\sqrt\pi}{4\pi^2}\int_0^t[{\rm I}_{0}(ms)-{\bf L}_{0}(ms)]\ddot\zeta(t-s)ds,
$$
which coincides  with the right hand side of (\ref{eqq1}). Hence, (\ref{use1}) follows.
\end{proof}
\begin{proposition}\label{PropD} 
For any  $\zeta (t)\in C^1 [0,\infty)\otimes\C^4$, such that $\ddot\zeta\in L^2_{loc}[0,\infty)\otimes\C^4$, there exists
\begin{equation}\label{Dp}
\lim_{\ve\to 0+}\lim_{x\to 0}K_m^{\ve}D_m^{-1}p_S(x,t)
=\frac{m\beta}{4\pi}\Big(\zeta_0\Big[mt\int\limits_{mt}^{\infty}\frac{J_1(u)du}{u}-J_0(mt)\Big]
\!+\zeta(t)-m\int\limits_0^t \Big(\int\limits_{ms}^{\infty}\frac{J_1(u)du}{u}\Big)\zeta(t-s)ds\Big),\quad t> 0.
\end{equation}
\end{proposition}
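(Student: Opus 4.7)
Since $D_m^{-1}=D_m(-\Delta+m^2)^{-1}$ and $p_S(\cdot,t)$ is radially symmetric, the odd Fourier multiplier $\alpha\cdot\xi$ in $D_m$ contributes nothing to the value at $x=0$; after the $K_m^\ve$-regularization the task therefore reduces to computing $m\beta\lim_{\ve\to 0+}\lim_{x\to 0}K_m^\ve(-\Delta+m^2)^{-1}p_S(x,t)$.

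I would then establish the real-space representation, valid for $x\ne 0$,
\[
(-\Delta+m^2)^{-1}p_S(x,t)=\int_0^t [\zeta(t-s)-\zeta_0]\, H(x,s)\,ds,\qquad H(x,s):=g(x)-\gamma(x,s).
\]
This can be checked by applying $(-\Delta+m^2)$ to the right-hand side, using $(-\Delta+m^2)H(\cdot,s)=\ddot\gamma(\cdot,s)$ from (\ref{gamma-eq}), and integrating by parts in $s$ (with $\gamma(\cdot,0)=\dot\gamma(\cdot,0)=0$), which recovers $\int_0^t\dot\gamma(x,s)\dot\zeta(t-s)\,ds=p_S(x,t)$ by Duhamel's formula. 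Equivalently in Fourier, one integrates $\hat p_S/\omega^2$ by parts in $s$ via $\dot\zeta(t-s)\,ds=-d\zeta(t-s)$ and uses $\hat H(\xi,s)=\cos(s\omega)/\omega^2$ together with $\sin(t\omega)/\omega^3=\int_0^t\hat H(\xi,s)\,ds$.

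The decisive step is passing to the limit $x\to 0$. Since $\gamma(x,s)\equiv 0$ on $\{s<|x|\}$, we have $H(x,s)=g(x)$ there, whereas for $s>|x|$ the singular parts cancel and $H(x,s)\to\mu(s)$ by Lemma \ref{lim_lam}. Splitting the integral at $s=|x|$ gives
\[
\int_0^t[\zeta(t-s)-\zeta_0]H(x,s)\,ds = g(x)\!\int_0^{|x|}\![\zeta(t-s)-\zeta_0]\,ds +\int_{|x|}^t[\zeta(t-s)-\zeta_0]H(x,s)\,ds.
\]
Using $\int_0^{|x|}[\zeta(t-s)-\zeta_0]\,ds=|x|[\zeta(t)-\zeta_0]+O(|x|^2)$ and $g(x)|x|\to 1/(4\pi)$, the first term tends to $[\zeta(t)-\zeta_0]/(4\pi)$, while the second tends by dominated convergence (on $\{s>|x|\}$ the kernel $H(x,s)$ is uniformly bounded as $x\to 0$) to $\int_0^t[\zeta(t-s)-\zeta_0]\mu(s)\,ds$. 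For the $K_m^\ve$-smoothed version one performs the same decomposition at the Fourier level with the factor $(\xi^2+m^2)^{-\ve}$ inserted, and invokes the modified Bessel/Struve-function estimates used in Lemmas \ref{imp}--\ref{lim_lam} and Proposition \ref{Phi_lim} to justify the double-limit interchange.

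Finally, multiplying by $m\beta$, substituting $\mu(s)=-\frac{m}{4\pi}\int_{ms}^\infty J_1(u)/u\,du$, and invoking the identity
\[
\int_0^t\mu(s)\,ds=-\frac{1}{4\pi}\Bigl[mt\!\int_{mt}^\infty\frac{J_1(u)}{u}\,du+1-J_0(mt)\Bigr]
\]
(obtained by substituting $v=ms$ and integrating by parts using $\frac{d}{dv}\bigl(v\!\int_v^\infty J_1(u)/u\,du\bigr)=\int_v^\infty J_1(u)/u\,du-J_1(v)$ together with $\int_0^{mt}J_1(v)\,dv=1-J_0(mt)$), brings the expression into the form (\ref{Dp}). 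The principal technical obstacle is precisely the boundary-layer contribution $[\zeta(t)-\zeta_0]/(4\pi)$: since $K_m^\ve g(0)$ itself diverges as $\ve\to 0+$, the piece of the integral on $\{s\le|x|\}$ cannot be naively discarded, and it is exactly what produces the isolated $\zeta(t)$ term in the stated formula (\ref{Dp}).
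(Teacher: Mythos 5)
Your proof is correct and arrives at exactly the paper's formula, but the central computation runs differently. On the reduction to $m\beta D_m^{-2}p_S$, both you and the paper agree in substance: the paper's Lemma \ref{LemD1} shows $\lim_{\ve\to0+}\lim_{x\to0}K_m^\ve\nabla D_m^{-2}p_S=0$ by integrating by parts in Fourier and noting the $\int i\xi_j(\xi^2+m^2)^{-2-\ve}\cos(t\sqrt{\xi^2+m^2})\,d^3\xi$-type integrals vanish by oddness, while you argue by radial symmetry plus the extra $2\ve$ derivatives from $K_m^\ve$; note the symmetry argument alone is \emph{not} enough without the smoothing (the paper's Remark \ref{rek} points out that $\lim_{x\to0}\nabla D_m^{-2}p_S$ does not exist), so your phrasing is slightly loose but the $K_m^\ve$-qualification you add makes it right. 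Where you genuinely diverge is the evaluation of $\lim_{x\to0}D_m^{-2}p_S$: the paper (Lemma \ref{LemD}) integrates by parts in (\ref{hp0}) to get (\ref{hp1}), then explicitly inverts the Fourier transforms (\ref{ek})--(\ref{ek1}), obtaining the real-space expression (\ref{iimp}), and finally takes $x\to0$ term by term. Your compact identity
\[
D_m^{-2}p_S(x,t)=\int_0^t\bigl[\zeta(t-s)-\zeta_0\bigr]\bigl(g(x)-\gamma(x,s)\bigr)\,ds,
\]
obtained from (\ref{hp1}) by collecting the $\zeta_0$-boundary term into the integrand, is tidier: the split at $s=|x|$ then directly delivers the boundary-layer contribution $[\zeta(t)-\zeta_0]/(4\pi)$ and the interior part $\int_0^t[\zeta(t-s)-\zeta_0]\mu(s)\,ds$ by dominated convergence (with the bound $|g(x)-\gamma(x,s)|\le C(1+s)$ for $s>|x|$ justifying the passage). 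I verified that your identity $\mu(s)=-\frac{m}{4\pi}\int_{ms}^{\infty}J_1(u)\,du/u$, your formula for $\int_0^t\mu(s)\,ds$, and the resulting algebra reproduce (\ref{Dp1}) exactly — in effect you reorganize the paper's (\ref{p-lim1}) into a single convolution, which makes the origin of the isolated $\zeta(t)$ term more transparent. The one place where the paper is more precise is the $\ve\to0+$ interchange: the paper simply cites (\ref{use}) with $\dot\zeta$ in place of $\ddot\zeta$ to conclude $\lim_{\ve}\lim_{x}K_m^\ve D_m^{-2}p_S=\lim_{x}D_m^{-2}p_S$, while you gesture at ``the same decomposition at the Fourier level'' and name Proposition \ref{Phi_lim}. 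Since (\ref{use}) is the exact statement you need, your reference to the right place should be made explicit; with that pinned down, your argument is complete.
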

Note that $D_m^{-1}=D_m D_m^{-2}=-i\alpha\cdot\nabla D_m^{-2}+m\beta D_m^{-2}$.  Then  the Proposition follows from two lemmas below.
\begin{lemma}\label{LemD} The following limit holds,
\begin{equation}\label{Dp1}
\lim_{\ve\to 0+}\lim_{x\to 0}K_m^{\ve}D_m^{-2}p_S(x,t)
=\frac{1}{4\pi}\Big(\zeta_0\Big[mt\int\limits_{mt}^{\infty}\frac{J_1(u)du}{u}-J_0(mt)\Big]
+\zeta(t)-m\int\limits_0^t \Big(\int\limits_{ms}^{\infty}\frac{J_1(u)du}{u}\Big)\zeta(t-s)ds\Big),\quad t> 0.
\end{equation} 
\end{lemma}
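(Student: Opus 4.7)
The plan is to reduce the double limit to the explicit convolution $(g\ast p_S)(0,t) = \int_{\R^3}g(y)p_S(y,t)\,d^3y$, then express it in terms of $\zeta_0$, $\zeta(t)$, and $\zeta(t-s)$ via integration by parts, and finally match the coefficients to the stated formula through a single Bessel-function identity. Since $K_m^\ve D_m^{-2} = K_m^{1+\ve}$ is convolution with a kernel $G_{1+\ve}$ whose Fourier transform is $(\xi^2+m^2)^{-1-\ve}$ and which tends to $g$ in $L^1_{loc}$ as $\ve\to 0+$, while $p_S(\cdot,t)$ is supported in $\{|y|\le t\}$ with at worst an $O(1/|y|)$ singularity at $0$, I would first justify (by dominated convergence, using that for $\ve>0$ the Fourier transform of $K_m^{1+\ve}p_S$ decays like $|\xi|^{-3-2\ve}$ so that $K_m^{1+\ve}p_S$ is continuous at $x=0$) that
\[
\lim_{\ve\to 0+}\lim_{x\to 0}K_m^\ve D_m^{-2}p_S(x,t)=\int_{\R^3} g(y)\,p_S(y,t)\,d^3y.
\]

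Writing $p_S$ via (\ref{p}), passing to spherical coordinates, and applying Fubini together with the substitution $u=\sqrt{\sigma^2-r^2}$, I obtain
\[
\int_{\R^3}g(y)p_S(y,t)\,d^3y=\frac{1}{4\pi}\int_0^t e^{-m\sigma}\dot\zeta(t-\sigma)\,d\sigma-\frac{m}{4\pi}\int_0^t \dot\zeta(t-\sigma)A(\sigma)\,d\sigma,
\]
where $A(\sigma):=\int_0^\sigma e^{-m\sqrt{\sigma^2-u^2}}J_1(mu)\,du$. Integration by parts in $\sigma$ (using $A(0)=0$ to kill a boundary term) then produces
\[
(D_m^{-2}p_S)(0,t)=\frac{1}{4\pi}\Big[\zeta(t)+\zeta_0\bigl(mA(t)-e^{-mt}\bigr)-m\int_0^t\bigl(e^{-m\sigma}+A'(\sigma)\bigr)\zeta(t-\sigma)\,d\sigma\Big].
\]

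Matching this expression to (\ref{Dp1}) reduces to the single identity
\[
e^{-m\sigma}+A'(\sigma)=\int_{m\sigma}^{\infty}\frac{J_1(u)}{u}\,du,\qquad \sigma\ge 0,\qquad (\star)
\]
because integrating $(\star)$ from $0$ to $t$, swapping the order in $\int_0^t\int_{m\sigma}^\infty \frac{J_1(u)}{u}du\,d\sigma$, and using $\int_0^{mt}J_1(u)du=1-J_0(mt)$ gives exactly $mA(t)-e^{-mt}=mt\int_{mt}^\infty\frac{J_1(u)}{u}du-J_0(mt)$. Both sides of $(\star)$ equal $1$ at $\sigma=0$ (since $A'(0)=0$ and $\int_0^\infty J_1(u)/u\,du=1$), so it suffices to prove $A''(\sigma)=me^{-m\sigma}-J_1(m\sigma)/\sigma$. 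The substitution $u=\sigma\sin\theta$ yields $A'(\sigma)=J_1(m\sigma)-m\sigma B(\sigma)$ with $B(\sigma):=\int_0^{\pi/2}e^{-m\sigma\cos\theta}J_1(m\sigma\sin\theta)\,d\theta$, and after $J_1'(z)=J_0(z)-J_1(z)/z$ is invoked the required relation further reduces to $\frac{d}{d\sigma}[\sigma B(\sigma)]=J_0(m\sigma)-e^{-m\sigma}$.

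This last step is the main obstacle, and the key observation is the exact $\theta$-derivative (using $J_0'=-J_1$)
\[
\frac{d}{d\theta}\bigl[e^{-m\sigma\cos\theta}J_0(m\sigma\sin\theta)\bigr]=m\sigma e^{-m\sigma\cos\theta}\bigl[\sin\theta\, J_0(m\sigma\sin\theta)-\cos\theta\, J_1(m\sigma\sin\theta)\bigr],
\]
whose integral over $[0,\pi/2]$ contributes $J_0(m\sigma)-e^{-m\sigma}$ from the boundary values, while a direct computation (once again invoking $J_1'=J_0-J_1/z$) identifies the integrand with that of $\frac{d}{d\sigma}[\sigma B(\sigma)]$. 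Everything else — the justification of the double limit, Fubini, and the integrations by parts — is routine once this structural picture is in place.
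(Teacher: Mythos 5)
Your argument is correct, but it takes a genuinely different route from the paper. The paper stays in Fourier space: it integrates by parts in $\hat p_S$ (formula~(\ref{hp1})), then inverts each resulting kernel explicitly in distributional form using the known Klein--Gordon propagator identity~(\ref{ek})--(\ref{ek1}), obtaining the full $x$-dependent expression~(\ref{iimp}) for $D_m^{-2}p_S(x,t)$, and only then sends $|x|\to 0$. You instead pass immediately to the physical-space convolution $(g*p_S)(0,t)$, evaluate it in spherical coordinates to produce the kernel $A(\sigma)=\int_0^\sigma e^{-m\sqrt{\sigma^2-u^2}}J_1(mu)\,du$, integrate by parts in $\sigma$, and reduce the whole lemma to the Bessel identity $(\star)$: $e^{-m\sigma}+A'(\sigma)=\int_{m\sigma}^\infty J_1(u)/u\,du$. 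That identity is the new ingredient your route requires, and your proof of it via the exact $\theta$-derivative of $e^{-m\sigma\cos\theta}J_0(m\sigma\sin\theta)$ is clean and correct --- I checked that $A'(\sigma)=J_1(m\sigma)-m\sigma B(\sigma)$, that the $J_1/\sigma$ terms cancel in $\frac{d}{d\sigma}[\sigma B(\sigma)]$, and that the boundary values give $J_0(m\sigma)-e^{-m\sigma}$. Where the paper's propagator formula~(\ref{gamma})/(\ref{ek}) packages the Bessel arithmetic automatically, your approach surfaces it as an explicit and rather pretty scalar identity. Both are valid; yours is arguably more elementary (no distributional Fourier inversions), at the price of having to discover and prove $(\star)$.

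The one place your write-up is lighter than the paper is the justification of the interchange $\lim_{\ve\to 0+}\lim_{x\to 0}K_m^{1+\ve}p_S(x,t)=(g*p_S)(0,t)$. The paper invokes~(\ref{use}) (applied with $\dot\zeta$ in place of $\ddot\zeta$), which was established in Lemma~\ref{Phi_lim} by a careful three-term splitting and separate treatment of each piece. Your replacement --- continuity of $K_m^{1+\ve}p_S$ at $x=0$ for $\ve>0$ because $\widehat{K_m^{1+\ve}p_S}\in L^1$, followed by dominated convergence in $\ve$ using that the Bessel kernels $G_{1+\ve}$ are uniformly dominated by $C/|y|$ near $0$ and decay exponentially --- is sound in outline (the constant in the $|y|^{2\ve-1}$ asymptotics involves $\Gamma(1+\ve)$ and stays bounded as $\ve\to 0+$, $p_S$ is compactly supported with a $1/|y|$ singularity, so $|y|^{-2}$ dominates), but as written it is a sketch rather than a proof. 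If you want this to stand on its own without citing~(\ref{use}), spell out the uniform bound on $G_{1+\ve}$ and the dominating function explicitly.
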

\begin{proof}
Note that $p_S(x,t)$ is a solution to (\ref{CP3}) with $\dot\zeta(t)$ instead of $\zeta(t)$. Hence,
\begin{equation}\label{hp0}
\hat p_S(\xi,t)=\int_0^t\frac{\sin s\sqrt{\xi^2+m^2}}{\sqrt{\xi^2+m^2}}\dot\zeta(t-s)ds,\quad
\widehat{D_m^{-2} p_S}(\xi,t)=\int_0^t\frac{\sin s\sqrt{\xi^2+m^2}}{(\xi^2+m^2)\sqrt{\xi^2+m^2}}\dot\zeta(t-s)ds,\quad t>0.
\end{equation}
Applying (\ref{use}) with $\dot\zeta$ instead of $\ddot\zeta$, we get
\begin{equation}\label{p-lim}
\lim_{\ve\to 0+}\lim_{x\to 0}K_m^{\ve}D_m^{-2}p_S(x,t) =\lim_{x\to 0}D_m^{-2}p_S(x,t).
\end{equation}
It remains to calculate $\lim\limits_{x\to 0}D_m^{-2}p_S(x,t)$. Integrating  by parts in (\ref{hp0}), we obtain
\begin{equation}\label{hp1}
\widehat{D_m^{-2} p_S}(\xi,t)=-\frac{\sin t\sqrt{\xi^2+m^2}}
{(\xi^2+m^2)\sqrt{\xi^2+m^2}}\zeta(0)+\int_0^t\frac{\cos s\sqrt{\xi^2+m^2}}{\xi^2+m^2}\zeta(t-s)ds.
\end{equation}
Let us calculate the inverse Fourier transform of 
$\frac{\cos s\sqrt{\xi^2+m^2}}{\xi^2+m^2}$ and
of $\frac{\sin t\sqrt{\xi^2+m^2}}{(\xi^2+m^2)\sqrt{\xi^2+m^2}}$.
In the sense of distributions, we obtain
\begin{eqnarray}\nonumber
{\cal F}^{-1}_{\xi\to x} \frac{\cos s\sqrt{\xi^2+m^2}}{\xi^2+m^2}
\!\!\!\!&=&\!\!\!{\cal F}^{-1}_{\xi\to x} \Big(\frac{1}{\xi^2+m^2}-\int_0^s\frac{\sin u\sqrt{\xi^2+m^2}}{\sqrt{\xi^2+m^2}}du\Big) \\
\nonumber
\!\!\!&=&\!\!\!\frac{e^{-m|x|}}{4\pi |x|}-\int_0^s\Big(\frac{\delta(u-|x|)}{4\pi |x|}
-\frac{m}{4\pi}\frac{\theta(u-|x|)J_1(m\sqrt{u^2-x^2})}{\sqrt{u^2-x^2}}\Big)du\\
\label{ek}
\!\!\!&=&\!\!\!\frac{e^{-m|x|}}{4\pi |x|}-\frac{\theta(s-|x|)}{4\pi |x|}+
\frac{m}{4\pi}\int_0^s\frac{\theta(u-|x|)J_1(m\sqrt{u^2-x^2})}{\sqrt{u^2-x^2}}du.
\end {eqnarray}
Similarly,
\begin{eqnarray}\nonumber
\!\!\!\!\!\!\!\!\!\!\!\!\!\!\!\!\!\!{\cal F}^{-1}_{\xi\to x} \frac{\sin t\sqrt{\xi^2+m^2}}{(\xi^2+m^2)\sqrt{\xi^2+m^2}}
&=&{\cal F}^{-1}_{\xi\to x} \Big(\int_0^t\frac{\cos s\sqrt{\xi^2+m^2}}{\xi^2+m^2}ds\Big)\\
\label{ek1}
\!\!\!&=&\!\!\!\int_0^t \Big(\frac{e^{-m|x|}}{4\pi |x|}-\frac{\theta(s-|x|)}{4\pi |x|}+  
\frac{m}{4\pi}\int_{0}^s\frac{\theta(u-|x|)J_1(m\sqrt{u^2-|x|^2})}{\sqrt{u^2-|x|^2}}du\Big)ds
\end {eqnarray}
Hence,   (\ref{hp1}) -(\ref{ek1}) imply for $t>0$ and $|x|\le t$
\begin{eqnarray}\nonumber
D_m^{-2}p_S(x,t)&=&-\zeta_0\Big(t\frac{e^{-m|x|}-1}{4\pi |x|}+\frac{1}{4\pi}
+\frac{m}{4\pi}\int_{|x|}^t\Big(\int_{|x|}^s\frac{J_1(m\sqrt{u^2-x^2})}{\sqrt{u^2-x^2}}du\Big)ds\Big)\\
\label{iimp}
&+&\int_0^t\Big(\frac{e^{-m|x|}}{4\pi |x|}-\frac{\theta(s-|x|)}{4\pi |x|}\Big)\zeta(t-s)ds+
\frac{m}{4\pi}\int_{|x|}^t\Big(\int_{|x|}^s\frac{J_1(m\sqrt{u^2-|x|^2})}{\sqrt{u^2-|x|^2}}du\Big)\zeta(t-s)ds.
\end {eqnarray}
Note, that
\begin{eqnarray}\nonumber
\lim_{|x|\to 0}\int_0^t\Big(\frac{e^{-m|x|}}{4\pi |x|}-\frac{\theta(s-|x|)}{4\pi |x|}\Big)\zeta(t-s)ds
&=&\lim_{|x|\to 0}\frac{e^{-m|x|}-1}{4\pi |x|}\int_0^t\zeta(t-s)ds+\lim_{|x|\to 0}\frac{1}{4\pi|x|}\int_0^{|x|}\zeta(t-s)ds\\
\nonumber
&=&-\frac{m}{4\pi}\int_0^t\zeta(s)ds +\frac{1}{4\pi}\zeta(t).
\end {eqnarray}
Moreover,
\begin{eqnarray}\nonumber
\lim_{|x|\to 0}\int_{|x|}^t\Big(\int_{|x|}^s\frac{J_1(m\sqrt{u^2-|x|^2})}{\sqrt{u^2-|x|^2}}du\Big)ds
&=&\int_0^t\Big(\int_0^s\frac{J_1(mu)}{u}du\Big)ds=t\int_0^t \frac{J_1(mu)du}{u}-\int_0^t J_1(mu)\,du\\
\nonumber
&=&t\int_0^t \frac{J_1(mu)du}{u}+\frac 1m J_0(mt)-\frac{1}{m}.
\end {eqnarray}
Substituting  this into (\ref{iimp}), we obtain
\begin{eqnarray}\nonumber
4\pi\lim_{|x|\to 0}D_m^{-2}p_S(x,t)&=&\zeta_0\Big(\!tm-tm\!\int\limits_0^t \frac{J_1(mu)du}{u}-J_0(mt)\Big)
-m\!\int\limits_0^t\zeta(s)ds +\zeta(t) +m\!\int\limits_{0}^t\Big(\!\int\limits_{0}^s\frac{J_1(mu)}{u}du\Big)\zeta(t-s)ds\\
\label{p-lim1}
&=&\zeta_0\Big(\!tm\int\limits_{mt}^{\infty} \frac{J_1(u)du}{u}-J_0(mt)\Big) +\zeta(t)
+m\!\int\limits_{0}^t\Big(\!\int\limits_{ms}^{\infty}\frac{J_1(u)}{u}du\Big)\zeta(t-s)ds,
\end {eqnarray}
since $\displaystyle\int\limits_0^{\infty}\frac{J_1(u)du}{u}=1$ by \cite[Formula 6.561(17)]{GR}.
Finally, (\ref{p-lim})  and (\ref{p-lim1})  imply (\ref{Dp1}).
\end{proof}
\begin{lemma}\label{LemD1} The following limit holds
\begin{equation}\label{Dp11}
\lim_{\ve\to 0+}\lim_{x\to 0}K_m^{\ve}\nabla D_m^{-2}p_S(x,t)=0.
\end{equation} 
\end{lemma}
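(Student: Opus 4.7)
The plan is to exploit the radial symmetry of $D_m^{-2}p_S$ in the spatial variable. Since
\[
\widehat{D_m^{-2}p_S}(\xi,t)=\int_0^t\frac{\sin s\sqrt{\xi^2+m^2}}{(\xi^2+m^2)^{3/2}}\dot\zeta(t-s)\,ds
\]
depends on $\xi$ only through $|\xi|$, the Fourier multiplier that defines $K_m^{\ve}\nabla D_m^{-2}p_S$, namely
\[
M_\ve(\xi,t):=\frac{i\xi}{(\xi^2+m^2)^{\ve}}\,\widehat{D_m^{-2}p_S}(\xi,t),
\]
is odd under $\xi\mapsto-\xi$. The strategy is to show that for every fixed $\ve>0$ the inner limit $\lim_{x\to 0}K_m^{\ve}\nabla D_m^{-2}p_S(x,t)$ equals the absolutely convergent integral of $M_\ve(\xi,t)$ over $\R^3$, which then vanishes by that parity; the outer limit $\ve\to 0+$ of zero is immediate.

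First I would establish enough decay of $\widehat{D_m^{-2}p_S}(\xi,t)$ at infinity to make $M_\ve$ integrable. One integration by parts in $s$, legitimate because $\ddot\zeta\in L^2_{\rm loc}[0,\infty)$ as assumed in Proposition \ref{PropD}, yields the pointwise bound
\[
|\widehat{D_m^{-2}p_S}(\xi,t)|\le \frac{C_t}{(\xi^2+m^2)^{2}},
\]
with $C_t$ depending on $|\dot\zeta(0)|$, $|\dot\zeta(t)|$, and $\|\ddot\zeta\|_{L^2[0,t]}$. Consequently $|M_\ve(\xi,t)|\le C_t|\xi|(\xi^2+m^2)^{-2-\ve}\in L^1(\R^3)$ for every $\ve>0$, so $K_m^{\ve}\nabla D_m^{-2}p_S(\cdot,t)$ is a continuous bounded function on $\R^3$ obtained by Fourier inversion.

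Next I would evaluate this continuous function at the origin. The parity argument applied to
\[
\lim_{x\to 0}K_m^{\ve}\nabla D_m^{-2}p_S(x,t)=\const\cdot\int_{\R^3}M_\ve(\xi,t)\,d^3\xi
\]
gives zero, because the substitution $\xi\to-\xi$ reverses the sign of $M_\ve$ while preserving the Lebesgue measure. Thus the inner limit is identically zero for every $\ve>0$, and passing to $\ve\to 0+$ delivers (\ref{Dp11}).

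The main obstacle I expect is securing the $L^1$ bound on $M_\ve$ uniformly as $\ve\to 0$. The naive estimate $|\widehat{D_m^{-2}p_S}|\le C_t(\xi^2+m^2)^{-3/2}$ falls short of integrability in three dimensions; only after the integration by parts in $s$ does one gain the extra factor $(\xi^2+m^2)^{-1/2}$ that is just enough to make $M_\ve\in L^1(\R^3)$ for every $\ve>0$. Once integrability is secured, both the continuity of $K_m^{\ve}\nabla D_m^{-2}p_S$ at $x=0$ and the vanishing of its value there follow at once from the radial symmetry.
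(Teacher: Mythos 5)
Your proposal is correct and follows essentially the same route as the paper: both perform one integration by parts in $s$ (using $\ddot\zeta\in L^2_{loc}$) to upgrade the decay of $\widehat{D_m^{-2}p_S}$ from $(\xi^2+m^2)^{-3/2}$ to $(\xi^2+m^2)^{-2}$, so that the multiplier $i\xi(\xi^2+m^2)^{-2-\ve}$ of the $K_m^\ve$-regularized gradient lies in $L^1(\R^3)$ for each fixed $\ve>0$, and then both conclude by observing that this multiplier is odd in $\xi$, forcing the value of the inverse Fourier transform at $x=0$ to vanish. One small remark: you do not need the $L^1$ bound to be uniform in $\ve$, as you briefly suggest; integrability for each fixed $\ve>0$ already gives the inner limit equals $0$ identically, and the outer limit is then trivial.
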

\begin{proof}
Due to (\ref{hp0}),
$$
\widehat{K_m^{\ve}D_m^{-2} p_S}(\xi,t)=\int_0^t\frac{\sin s\sqrt{\xi^2+m^2}}
{(\xi^2+m^2)^{1+\ve}\sqrt{\xi^2+m^2}}\dot\zeta(t-s)ds,\quad\ve\ge 0,\quad t>0.
$$
Integrating here by parts, we obtain
\begin{equation}\label{intpart1}
\widehat{K_m^{\ve}D_m^{-2} p_S}(\xi,t)
=-\frac{\cos t\sqrt{\xi^2+m^2}}{(\xi^2+m^2)^{2+\ve}}\dot\zeta(0)+\frac{1}{(\xi^2+m^2)^{2+\ve}}\dot\zeta(t)
-\int_0^t \frac{\cos s\sqrt{\xi^2+m^2}}{(\xi^2+m^2)^{2+\ve}}\ddot\zeta(t-s)ds.
\end {equation}
We have
$$
\lim_{x\to 0} K_m^{\ve}\nabla_j{\cal F}^{-1}_{\xi\to x}\frac{1}{(\xi^2+m^2)^{2+\ve}}=\frac 1{(2\pi)^3}
\int_{\R^3}\frac{i\xi_j }{(\xi^2+m^2)^{2+\ve}}\,d^3\xi=0,\quad\ve>0,
$$
$$
\lim_{x\to 0} K_m^{\ve}\nabla_j{\cal F}^{-1}_{\xi\to x}\frac{\cos t\sqrt{\xi^2+m^2}}{(\xi^2+m^2)^{2+\ve}}dk
=\frac 1{(2\pi)^3}\int_{\R^3}\frac{i\xi_j  \cos t\sqrt{\xi^2+m^2}}{(\xi^2+m^2)^{2+\ve}}\,d^3\xi=0,\quad\ve>0.
$$
Substituting this into   (\ref{intpart1}), we  get (\ref{Dp11}). 
\end{proof}
\begin{remark}\label{rek}
Note, that  the limit (\ref{Dp11}) does not exist without the smoothing operator $K_m^\ve$.
This  immediately follows from the Taylor expansion of $D_m^{-2}p_S(x,t)$: using  (\ref{iimp}),  we obtain
$$
D_m^{-2}p_S(x,t)=C(t)-\frac{\dot\zeta(t)}{8\pi}|x|+{\cal O}(|x|^2),\quad |x|\to 0, \quad t\ge 0,
$$
where 
$$
C(t)=\frac 1{4\pi}\Big(\zeta_0\Big[tm-1-m\int_0^t\Big(\int_0^t\frac{J_1(mu)}{u}du\Big)ds\Big]+\zeta(t)-m\int_0^t\zeta(s)ds
+m \int_0^t\Big(\int_0^t\frac{J_1(mu)}{u}du\Big)\zeta(t-s)ds\Big).
$$
Evidently, $\lim\limits_{x\to 0}\nabla |x|$ does not exist.
\end{remark}
\hspace{-6mm}
\subsection {Dirac equation with  the source $\chi(t)D_m^{-2}\delta(x)=\chi(t)g(x)$}
For arbitrary  $\chi(t)\in C [0,\infty)\otimes\C^4$,   consider  the  equation 
\begin{equation}\label{DG}
i\dot{h}(x,t)  =  D_m h(x,t) +\chi(t)\,  g(x),\quad h(x,0)=h_0(x).
\end{equation}
\begin{lemma}\label{e-1}
Let $\chi(t)\in C[0,\infty)\otimes\C^4$, with  $\dot \chi\in L^1_{loc}[0,\infty)\otimes\C^4$, and let $h_0 \in H^2(\R^3)$.  Then the solution $h(x,t)$ to (\ref{DG}) satisfies
$$
h(\cdot,t)\in C([0,\infty),H^{\frac 32-}(\R^3)).
$$.
\end{lemma}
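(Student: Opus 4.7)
The plan is to use Duhamel's formula and then shift a time derivative off $\chi$ via integration by parts, exploiting that $D_m^{-1}$ gains one order of spatial regularity. First I would write
\[
h(x,t)=e^{-iD_m t}h_0(x)-i\int_0^t e^{-iD_m(t-s)}\chi(s)g(x)\,ds.
\]
Since the free Dirac propagator is unitary on every $H^s(\R^3)$ and $h_0\in H^2$, the first summand lies in $C([0,\infty),H^2)\subset C([0,\infty),H^{3/2-})$, so everything reduces to controlling the Duhamel integral.

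The key input is the factorisation $D_m^2=-\De+m^2$, which makes $D_m^{-1}$ a bounded map $H^s\to H^{s+1}$. A direct Fourier computation gives $\hat g(\xi)=(\xi^2+m^2)^{-1}$, so $g\in H^{1/2-}(\R^3)$, and applying $D_m^{-1}$ to any $v\,g$ with $v\in\C^4$ lands in $H^{3/2-}(\R^3)$. Applying Minkowski's inequality naively inside the Duhamel integral would yield only $H^{1/2-}$, falling short by exactly the one order that integration by parts in time will supply.

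I would then integrate by parts in $s$ using $e^{-iD_m(t-s)}=-iD_m^{-1}\frac{d}{ds}e^{-iD_m(t-s)}$; this is legitimate since $\chi\in C$ with $\dot\chi\in L^1_{loc}$ is absolutely continuous. Using that $D_m^{-1}$ commutes with $e^{-iD_m\tau}$, this yields
\[
-i\int_0^t e^{-iD_m(t-s)}\chi(s)g\,ds=-D_m^{-1}(\chi(t)g)+e^{-iD_m t}D_m^{-1}(\chi(0)g)+\int_0^t e^{-iD_m(t-s)}D_m^{-1}(\dot\chi(s)g)\,ds.
\]
Each of the three terms is continuous from $[0,\infty)$ into $H^{3/2-}$: the first because $\chi$ is continuous and $v\mapsto D_m^{-1}(vg)$ is a bounded linear map $\C^4\to H^{3/2-}$; the second by strong continuity of $e^{-iD_m t}$ on $H^{3/2-}$; the third because its $H^{3/2-}$-norm is dominated by $C_g\int_0^t|\dot\chi(s)|\,ds$, with continuity in $t$ following from dominated convergence combined with the strong continuity of the propagator.

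I expect the only technical subtlety to be the rigorous justification of the vector-valued integration by parts under the weak $L^1_{loc}$ hypothesis on $\dot\chi$; on each interval $[0,T]$ one may approximate $\chi$ in $W^{1,1}([0,T])$ by $C^1$ functions, apply the identity there, and pass to the limit using the uniform $H^{3/2-}$-bound on $e^{-iD_m(t-s)}D_m^{-1}(vg)$. Everything else is routine.
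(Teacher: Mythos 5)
Your proof is correct, and it reaches the same conclusion by a cleaner, more abstract route than the paper's. The paper first factors the inhomogeneous Dirac solution through a Klein--Gordon substitution $v=(-i\partial_t-D_m){\rm w}$, passes to the Fourier side, and performs a Stieltjes integration by parts in time inside the explicit Fourier integral, arriving at a uniform bound $|q(r,t)|\le|\chi(0)|+|\chi(t)|+\int_0^t|\dot\chi(s)|\,ds$ for the amplitude, which is then integrated against the weight $r^2(r^2+m^2)^{-3/2-\ve}$. You instead work directly with the Dirac group via Duhamel and shift the $s$-derivative off the propagator onto $\chi$ using $e^{-iD_m(t-s)}=-iD_m^{-1}\partial_s e^{-iD_m(t-s)}$; the $D_m^{-1}$ that this produces supplies the same half-derivative gain. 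Both arguments have the identical core idea (integration by parts in time converts one application of $\chi$ into $\dot\chi$ and buys $(\xi^2+m^2)^{-1/2}$), but your semigroup phrasing avoids the detour through Klein--Gordon and the explicit spherical Fourier computation, at the cost of needing to be slightly careful that $\chi(s)g$ itself is not in the domain of $D_m$ --- which you correctly sidestep by putting $D_m^{-1}$ inside before differentiating, so the product rule is applied only to the $H^1$ function $D_m^{-1}(\chi(s)g)$.

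One small remark: your numerology matches the paper exactly and should be kept explicit --- $\hat g=(\xi^2+m^2)^{-1}$ gives $g\in H^{1/2-}$ but not $H^{1/2}$, and $D_m^{-1}$ gains one full derivative since $D_m^{-1}=D_m(-\Delta+m^2)^{-1}$; combined this lands the source term in $H^{3/2-}$, which is sharp for this method and is why the lemma is stated with $H^{3/2-}$ rather than $H^{3/2}$. The approximation argument you sketch at the end (regularize $\chi$ in $W^{1,1}_{loc}$, pass to the limit) is standard and closes the gap; alternatively one can avoid it entirely by writing $\chi(s)=\chi(0)+\int_0^s\dot\chi(u)\,du$ and applying Fubini, which sidesteps any differentiability issue.
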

\begin{proof}
We  represent $h(x,t)$ as the sum $h(x,t)=u(x,t)+v(x,t)$,
where $u(x,t)$ is a solution to the free Dirac equation 
with  initial data $h_0(x)$, and
$v(x,t)$ is a solution to (\ref{DG}) with zero initial data. 
Evidently, $u(\cdot,t)\in C([0,\infty), H^2(\R^3))$. It remains to prove that
\begin{equation}\label{u2}
v(\cdot,t)\in C([0,\infty), H^{\frac 32-\ve}(\R^3))\quad {\rm for}~ {\rm any}\quad  \ve>0.
\end{equation}
We represent $u(x,t)$ as $u(x,t)=(-i\pa_t -D_m){\rm w}(x,t)$, where  ${\rm w}(x,t)$ is the solution to
$$
\ddot{\rm w}(x,t)  = (-\Delta+m^2){\rm w}(x,t)+\chi(t)\, g(x),\quad {\rm w}(x,0)=0,\quad \dot {\rm w}(x,0)=0.
$$
Then, for (\ref{u2}) we need to prove that
\begin{equation}\label{w-ek}
{\rm w}(\cdot,t)\in C([0,\infty), H^{\frac 52-\ve}(\R^3)),\quad \dot {\rm w}(\cdot,t)\in C([0,\infty), H^{\frac 32-\ve}(\R^3))\quad {\rm for}~ {\rm any}\quad  \ve>0.
\end{equation}
Applying the Fourier  transform, we obtain
\begin{eqnarray}\nonumber
(\xi^2+m^2)^{\frac 54-\frac{\ve}2}\,\widetilde{{\rm w}} (\xi,t)&=&
\int_0^t\frac{\sin s\sqrt{\xi^2+m^2}}{(\xi^2+m^2)^{\frac 14+\frac{\ve}2}} ~\chi(t-s)ds,\\
\nonumber
(\xi^2+m^2)^{\frac 34-\frac{\ve}2}\,\widetilde{\dot {\rm w}}(\xi,t)&=&
\int_0^t\frac{\cos s\sqrt{\xi^2+m^2}}{(\xi^2+m^2)^{\frac 14+\frac{\ve}2}} ~\chi(t-s)ds.
\end{eqnarray}
Then (\ref{w-ek}) is equivalent to
$$
\big\Vert\int\limits_0^t\frac{\sin s\sqrt{\xi^2+m^2}}{(\xi^2+m^2)^{\frac 14+\frac{\ve}2}}\chi(t-s)ds\big\Vert^2<C_1(t),\quad
\big\Vert\int\limits_0^t\frac{\cos s\sqrt{\xi^2+m^2}}{(\xi^2+m^2)^{\frac 14+\frac{\ve}2}}\chi(t-s)ds\big\Vert^2<C_2(t),\quad\ve>0, \quad t>0.
$$
Both integrals are estimated in the same way, and we consider the first integral only.
One has
$$
\big\Vert\int\limits_0^t\frac{\sin s\sqrt{\xi^2\!+m^2}}{(\xi^2\!+m^2)^{\frac 14+\frac{\ve}2}}\chi(t\!-s)ds\big\Vert^2
=\big\Vert\int\limits_0^t \chi(t\!-s)\frac{d\cos s\sqrt{\xi^2\!+m^2}}{(\xi^2\!+m^2)^{\frac 34+\frac{\ve}2}}\big\Vert^2
=4\pi \int\limits_0^\infty |q(r,t)|^2\frac{r^2dr}{(r^2\!+m^2)^{\frac 32+\ve}}\le C(t)
$$
since
$$
|q(r,t)|:=|\int\limits_0^t\,\chi(t-s) d\big(\cos s\sqrt{r^2\!+m^2}\big)|\le |\chi(0)|+|\chi(t)|+\int\limits_0^t\,|\dot\chi(s)| ds\le C(t),\quad t>0.
$$
\end{proof}
\section{Proof of well-posedness}
\label{nonlin-sect}
First, we modify the nonlinearity $F$ so that it becomes Lipschitz continuous.
Define
\begin{equation}\label{Lambda}
\Lambda(\psi_0)=\sqrt{({\cal H}_F(\psi_0)+a)/b},
\end{equation}
where $\psi_0\in {\cal D}_F$ is the initial data from Theorem \ref{theorem-well-posedness} 
and $a$, $b$ are constants from (\ref{bound-below}).
Then we may pick a modified potential function
$\tilde U(\zeta)\in C^2(\C^4,\R)$, so that\\
i) the identity holds
\begin{equation}\label{Lambda1}
\tilde U(\zeta)= U(\zeta),\quad |\zeta|\le\Lambda(\psi_0),
\end{equation}
ii) $\tilde U(\zeta)$ satisfies (\ref{bound-below}) with the same constant  $a$, $b$ as $U(\zeta)$ does:
\begin{equation}\label{Lambda2}
\tilde U(\zeta)\ge b|\zeta|^2 -a,\quad \zeta\in\C^4,
\end{equation}
iii) the functions $\tilde F_j(\zeta_j)=\pa_{\ov\zeta_j}\tilde U(\zeta)$ are Lipschitz continuous:
\begin{equation}\label{Lambda22}
|\tilde F_j(\zeta_j)-\tilde F_j(\eta_j)|\le C|\zeta_j-\eta_j|,\quad\zeta_j,\eta_j\in\C.
\end{equation}
We suppose that $\psi_0=f+\zeta_0 g$, where $f\in H^2(\R^3)$,
and  consider the Cauchy problem for (\ref{Dirac})) with the modified nonlinearity $\tilde F$.
As before we denote by $\psi_f(x,t)\in C([0,\infty), H^2(\R^3))$  the unique solution to  (\ref{CP1}), and by $\psi_{\zeta_0g}(x,t)\in C([0,\infty), L^2(\R^3))$
the  unique solution to  (\ref{CP2}) with $e=\zeta_0$. Let 
$\lambda(t)$ and $\mu(t)$  are  defined by (\ref{lam}) and  by (\ref{mu}).
The following lemma is proved by standard argument from the contraction mapping principle.
\begin{lemma}\label{LLWP}
Let conditions  (\ref {Lambda1})--(\ref {Lambda22}) be satisfied. 
Then there exists $\tau>0$ such that  the Cauchy problem
\begin{eqnarray}\nonumber
&&\lambda(t)+\zeta_0\mu(t)+\frac{1}{4\pi}\Big(m\zeta(t)-\dot\zeta(t)-m\int\limits_0^t\frac{J_1(ms)}{s}\zeta(t-s)ds\Big)\\
\label{delay}
&&+\frac{im\beta}{4\pi}\Big(\zeta_0\Big[tm\int\limits_{mt}^{\infty}\frac{J_1(u)du}{u}-J_0(mt)\Big]
+\zeta(t)-m\int\limits_0^t \Big(\int\limits_{ms}^{\infty}\frac{J_1(u)du}{u}\Big)\zeta(t-s)ds\Big)=\tilde F(\zeta(t)),\quad \zeta(0)=\zeta_{0}
\end{eqnarray}
has  unique solution $\zeta\in C^1[0,\tau]\otimes\C^4$.
\end{lemma}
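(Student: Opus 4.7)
The plan is to recast the delay integro-differential equation (\ref{delay}) as a Volterra-type fixed point problem and apply the Banach contraction mapping theorem on $C([0,\tau],\C^4)$ for sufficiently small $\tau>0$.

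First, I would solve (\ref{delay}) algebraically for $\dot\zeta(t)$, so that it takes the schematic form
\begin{equation*}
\dot\zeta(t)=A(t)+M\zeta(t)+\mathcal K[\zeta](t)-4\pi\tilde F(\zeta(t)),\qquad \zeta(0)=\zeta_0,
\end{equation*}
where $M=mI+im\beta$ is a constant $4\times 4$ matrix, the source term is
\begin{equation*}
A(t):=4\pi\bigl(\lambda(t)+\zeta_0\mu(t)\bigr)+im\beta\zeta_0\Bigl[mt\!\int_{mt}^\infty\!\tfrac{J_1(u)}{u}\,du-J_0(mt)\Bigr],
\end{equation*}
and the linear delay part is
\begin{equation*}
\mathcal K[\zeta](t):=-m\!\int_0^t\!\tfrac{J_1(ms)}{s}\zeta(t-s)\,ds-im^2\beta\!\int_0^t\!\Bigl(\int_{ms}^\infty\!\tfrac{J_1(u)}{u}\,du\Bigr)\zeta(t-s)\,ds.
\end{equation*}
Integrating from $0$ to $t$, the Cauchy problem (\ref{delay}) becomes equivalent to the fixed-point equation $\zeta=T\zeta$, where
\begin{equation*}
(T\zeta)(t):=\zeta_0+\int_0^t\!\bigl[A(s)+M\zeta(s)+\mathcal K[\zeta](s)-4\pi\tilde F(\zeta(s))\bigr]\,ds.
\end{equation*}

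Next, I would check that $A\in C([0,\infty),\C^4)$. Continuity of $\lambda$ is built into (\ref{lam}), continuity of $\mu$ on $[0,\infty)$ (including $t=0$) is supplied by Lemma \ref{lim_lam}, and the bracketed Bessel expression is obviously continuous on $[0,\infty)$, using $\int_0^\infty J_1(u)/u\,du=1$ at $t=0$. The kernels appearing in $\mathcal K$ satisfy $|J_1(ms)/s|\le Cm$ near $s=0$ (since $J_1(u)/u\to 1/2$) and $|\int_{ms}^\infty J_1(u)/u\,du|\le 1$, so each is locally bounded. Consequently, for any $\tau>0$ there are constants $C_1(\tau),C_2(\tau)$ with
\begin{equation*}
\sup_{t\in[0,\tau]}|\mathcal K[\zeta](t)|\le C_1(\tau)\,\tau\,\|\zeta\|_\infty,\qquad \|A\|_{C[0,\tau]}\le C_2(\tau).
\end{equation*}

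Then I would set up the contraction on the closed ball $B_R:=\{\zeta\in C([0,\tau],\C^4):\|\zeta-\zeta_0\|_\infty\le R\}$ with a suitable $R>0$. Using the Lipschitz bound (\ref{Lambda22}) on $\tilde F$, the boundedness of $M$, and the estimates above, one gets
\begin{equation*}
\|T\zeta-T\eta\|_{C[0,\tau]}\le \bigl(|M|\tau+C_1(\tau)\tau+4\pi C\tau\bigr)\|\zeta-\eta\|_{C[0,\tau]},
\end{equation*}
and similarly $T$ maps $B_R$ to itself provided $\tau$ is small enough (depending on $\psi_0$ through $\|A\|_\infty$ and $R$). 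Choosing $\tau$ so that the bracket above is strictly less than $1$ yields a unique fixed point $\zeta\in C([0,\tau],\C^4)$.

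Finally, for the $C^1$ regularity: since $A$, $\mathcal K[\zeta]$ and $\tilde F(\zeta)$ are continuous functions of $t$ whenever $\zeta$ is continuous, the integrand in the definition of $T$ is continuous, so $\zeta=T\zeta$ is automatically $C^1$ on $[0,\tau]$, and the differential equation at the top of this sketch holds pointwise. I do not expect any serious obstacle here; the only mildly delicate points are (a) verifying continuity of $A(t)$ at $t=0$ (which is where Lemma \ref{lim_lam} together with $\int_0^\infty J_1(u)/u\,du=1$ enters), and (b) checking that the Bessel-type kernels in $\mathcal K$ are integrable near $s=0$ so that the linear delay term is a bounded $O(\tau)$ perturbation and does not spoil the contraction.
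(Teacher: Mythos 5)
Your proof is correct and matches the paper's approach: the paper states only that Lemma \ref{LLWP} ``is proved by standard argument from the contraction mapping principle'' and gives no further details. Your reduction to a Volterra fixed-point equation, the continuity and boundedness checks on $\lambda$, $\mu$, the Bessel kernels and the inhomogeneity $A(t)$, the Banach fixed-point argument on a small interval, and the automatic upgrade to $C^1$ regularity are precisely the omitted details, carried out correctly.
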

Denote 
\begin{equation}\label{pp-sol}
\psi_S(x,t):=\varphi_S(x,t)+iD_m^{-1}\zeta_0\dot\gamma(x,t)+ iD_m^{-1}p_S(x,t),
\end{equation}
where $\gamma(x,t)$ is defined in (\ref{gamma}), and
$$
\varphi_S(x,t)= \frac{\theta(t-|x|)}{4\pi|x|}\zeta(t-|x|)-\frac{m}{4\pi}
\int_0^t\frac{\theta(s-|x|)J_1(m\sqrt{s^2-|x|^2})}{\sqrt{s^2-|x|^2}}\zeta(t-s)ds,
$$
$$
p_S(x,t)= \frac{\theta(t-|x|)}{4\pi|x|}\dot\zeta(t-|x|)-\frac{m}{4\pi}
\int_0^t\frac{\theta(s-|x|)J_1(m\sqrt{s^2-|x|^2})}{\sqrt{s^2-|x|^2}}\dot\zeta(t-s)ds
$$ 
with $\zeta(t)$ from Lemma \ref{LLWP}.
Now we establish the local well-posedeness for (\ref{Dirac}).
\begin{proposition}\label{TLWP}(Local well-posedeness).
Let  the conditions  (\ref {Lambda1})--(\ref {Lambda22}) hold.
Then the function
\begin{equation}\label{sol-sum}
\psi(x,t):= \psi_f(x,t)+\psi_{\zeta_0g}(x,t)+\psi_S(x,t)\in D_{\tilde F}, \quad t\in [0,\tau]
\end{equation}
is a unique  solution to the Cauchy problem  
\begin{equation}\label{CP}
\left\{\begin{array}{l}
i\dot{\psi}(x,t) = D_m\psi(x,t)-D_m^{-1}\zeta(t) \delta(x)\\\\
\lim\limits_{\ve \to 0+}\lim\limits_{x \to 0}\, K_m^{\ve}\left( \psi(x,t)-\zeta(t) g(x)\right)=\tilde F(\zeta(t))\\\\
\psi(x,0) = \psi_0(x)=f(x)+\zeta_0g(x)
\end{array}\right.
\end{equation}
\end{proposition}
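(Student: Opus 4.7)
The strategy is to take the ansatz (\ref{sol-sum}) with $\zeta(t)$ furnished by Lemma \ref{LLWP} and verify in turn the initial condition, the distributional PDE, the matching at the origin, and the regularity of the regular part. The key structural observation is that the four analytical ingredients from Sections \ref{fD-sect} and \ref{dD-sect} --- continuity of $\psi_f$, Lemma \ref{lim_lam}, Lemma \ref{Phi_lim}, and Proposition \ref{PropD} --- contribute precisely the four summands on the left-hand side of (\ref{delay}), so that the matching condition is equivalent to (\ref{delay}) itself.

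\textbf{Initial data, PDE, and matching.} At $t=0$ one has $\psi_S(\cdot,0)=0$ and $\psi_{\zeta_0 g}(\cdot,0)=\zeta_0 g$, so $\psi(\cdot,0)=\psi_0$. Since $\psi_f$, $\psi_{\zeta_0 g}$ solve the free Dirac equation and $\psi_S$ solves (\ref{CP3}), the first line of (\ref{CP}) holds in ${\cal D}'(\R^3)$ for every $t$. Before invoking Lemma \ref{Phi_lim} and Proposition \ref{PropD} I will observe that $\ddot\zeta\in L^2_{loc}$: differentiating (\ref{delay}) in $t$ and solving for $\ddot\zeta$, all terms lie in $L^2_{loc}$ because $\dot\lambda\in L^2_{loc}$ by Lemma \ref{imp}, $\dot\mu(t)=\frac{mJ_1(mt)}{4\pi t}$ is bounded, $\tilde F$ is Lipschitz, and $\dot\zeta\in C$. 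Substituting (\ref{rw}) for $\psi_{\zeta_0 g}$ and (\ref{pp-sol}) for $\psi_S$, the two $iD_m^{-1}\zeta_0\dot\gamma$ contributions cancel and yield
\[
\psi(x,t)-\zeta(t)g(x)=\psi_f(x,t)+\zeta_0\bigl(g(x)-\gamma(x,t)\bigr)+\bigl(\varphi_S(x,t)-\zeta(t)g(x)\bigr)+iD_m^{-1}p_S(x,t).
\]
Applying $K_m^\ve$ and letting $x\to 0$ and then $\ve\to 0+$, the four summands converge to $\lambda(t)$, $\zeta_0\mu(t)$, the right-hand side of (\ref{dif1}), and $i$ times the right-hand side of (\ref{Dp}), respectively. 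Their sum is the left-hand side of (\ref{delay}) and therefore equals $\tilde F(\zeta(t))$.

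\textbf{Regularity and uniqueness.} The main obstacle is to show $\psi_{reg}:=\psi-\zeta(t)g\in H^{3/2-}$. Instead of estimating each summand of (\ref{sol-sum}) separately, I will derive the equation that $\psi_{reg}$ satisfies directly. Using the identity $D_m^{-1}\delta(x)=D_m g(x)$, a short computation gives
\[
i\dot\psi_{reg}(x,t)=D_m\psi_{reg}(x,t)-i\dot\zeta(t)g(x),\qquad \psi_{reg}(x,0)=f(x)\in H^2(\R^3).
\]
This is (\ref{DG}) with $\chi=-i\dot\zeta\in C[0,\tau]$ and $\dot\chi=-i\ddot\zeta\in L^2_{loc}\subset L^1_{loc}$, so Lemma \ref{e-1} delivers $\psi_{reg}\in C([0,\tau],H^{3/2-}(\R^3))$; combined with the matching identity just proved, this yields $\psi(\cdot,t)\in{\cal D}_{\tilde F}$. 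For uniqueness, any solution of (\ref{CP}) produces through the matching condition a function $\zeta\in C^1[0,\tau]$ satisfying (\ref{delay}); such $\zeta$ is unique by Lemma \ref{LLWP}, and once $\zeta$ is fixed, each summand in (\ref{sol-sum}) is uniquely determined by linearity of the Dirac equation with prescribed source and initial data.
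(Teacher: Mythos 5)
Your proof follows essentially the same route as the paper: compute the matching limit term by term via $\lambda(t)$, $\mu(t)$, Lemma \ref{Phi_lim}, and Proposition \ref{PropD}, verify the distributional PDE by linearity, obtain $H^{3/2-}$ regularity by feeding $\psi_{reg}$ into Lemma \ref{e-1}, and run the construction in reverse for uniqueness. You are in fact slightly more careful than the paper in one respect: you explicitly establish $\ddot\zeta\in L^2_{loc}$ from (\ref{delay}) \emph{before} invoking Lemma \ref{Phi_lim} and Proposition \ref{PropD}, whose hypotheses require it, whereas the paper only records the weaker $\ddot\zeta\in L^1_{loc}$ at the later regularity step.
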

\begin{proof}
Using  (\ref{lam}),   (\ref{rw}),   (\ref{pp-sol}), (\ref{mu}),  (\ref{dif1})  and (\ref{Dp}) successively, we get 
\begin{eqnarray}\nonumber
&&\lim_{\ve \to 0+}\lim_{x \to 0}\, K_m^{\ve} \left( \psi(x,t)\!-\!\zeta(t) g(x)\right)\\
\nonumber
&&=\lim_{\ve \to 0+}\lim_{x \to 0}K_m^{\ve} \left( \psi_{f}(x,t)+ \psi_{\zeta_0g}(x,t)+\psi_S(x,t)-\zeta(t) g(x)\right)\\
\nonumber
&&=\lambda(t)+\lim_{\ve \to 0+}\lim_{x \to 0}\, K_m^{\ve}\left(\zeta_0 g(x)-\zeta_0\gamma(x,t)-iD_m^{-1}\zeta_0\dot\gamma(x,t)
+ \varphi_S(x,t)+iD_m^{-1}\zeta_0\dot\gamma(x,t)+ iD_m^{-1}p_S(x,t)-\zeta(t) g(x)\right)\\
\nonumber
&&=\lambda(t)+\zeta_0\mu(t)+\lim_{\ve \to 0+}\lim_{x \to 0}K_m^{\ve}\left( \varphi_S(x,t)-\zeta(t) g(x))+ iD_m^{-1}p_S(x,t)\right) \\
\nonumber
&&=\lambda(t)+\zeta_0\mu(t)+\frac{1}{4\pi}\big(m\zeta(t)-\dot\zeta(t)-m\int_0^t\frac{J_1(ms)}{s}\zeta(t-s)ds\big)\\
\nonumber
&&+\frac{im\beta}{4\pi}\Big(\zeta_0\Big[tm\int\limits_{mt}^{\infty}\frac{J_1(u)du}{u}-J_0(mt)\Big]
+\zeta(t)-m\int_0^t \Big(\int\limits_{ms}^{\infty}\frac{J_1(u)du}{u}\Big)\zeta(t-s)ds\Big)=\tilde F(\zeta(t)),\quad t\ge0
\end{eqnarray}
since $\zeta(t)$ solves (\ref{delay}). Hence, the second equation of (\ref{CP}) is satisfied.  
Further,
\begin{eqnarray}\nonumber
i\dot\psi(x,t)&=&i\dot\psi_f(x,t)+i\dot\psi_{\zeta_0g}(x,t)+i\dot\psi_S(x,t)=D_m\psi_f(x,t)+D_m\psi_{\zeta_0g}(x,t)
+D_m\psi_S(x,t)-D_m^{-1}\zeta(t)\delta(x)\\
\nonumber
&=&D_m\psi(x,t)-D_m^{-1}\zeta(t)\delta(x).
\end{eqnarray}
Hence, $\psi$ solves the first equation of (\ref{CP}). 
Finally, the function
$\psi_{reg}(x,t)=\psi(x,t)-\zeta(t) g(x)$ satisfies
\begin{equation}\label{psipsi}
\psi_{reg}(\cdot,t)\in C([0,\tau], H^{\frac 32-}(\R^3)).
\end{equation}
Indeed,  $\psi_{reg}(x,t)$ is a solution to 
\begin{equation}\label{psi-reg}
i\dot\psi_{reg}(x,t)=D_m\psi_{reg}(x,t)-i\dot\zeta(t) g(x)
\end{equation}
with  initial data $f\in H^2(\R^3)$ and with $\dot\zeta\in C[0,\infty)\otimes\C^4$  satisfying  equation (\ref{delay}). 
Lemma \ref{imp} implies that  $\dot\lambda\in L^1_{loc}[0,\infty)\otimes\C^4$. Moreover,  $\dot\mu\in C([0,\infty))$ by (\ref{mu}).
Hence,  (\ref{delay}) implies  that $\ddot\zeta\in L^1_{loc}[0,\infty)\otimes\C^4$, and  (\ref{psipsi}) holds by  Lemma \ref{e-1}.
\smallskip\\
Suppose now that $\tilde\psi(x,t)=\tilde\psi_{reg}(x,t)+\tilde\zeta(t) g(x)$  is another  solution to (\ref{CP}). 
Then, by reversing the above argument, the second equation of (\ref{CP})
implies that $\tilde\zeta(t)$ solves the Cauchy problem (\ref{delay}). 
The uniqueness of the solution of (\ref{delay}) implies that
$\tilde\zeta(t)=\zeta(t)$. Then, defining 
$$
\varphi_S(x,t)= \frac{\theta(t-|x|)}{4\pi|x|}\zeta(t-|x|)-\frac{m}{4\pi}
\int_0^t\frac{\theta(s-|x|)J_1(m\sqrt{s^2-|x|^2})}{\sqrt{s^2-|x|^2}}\zeta(t-s)ds,
$$
$$
p_S(x,t)= \frac{\theta(t-|x|)}{4\pi|x|}\dot\zeta(t-|x|)-\frac{m}{4\pi}
\int_0^t\frac{\theta(s-|x|)J_1(m\sqrt{s^2-|x|^2})}{\sqrt{s^2-|x|^2}}\dot\zeta(t-s)ds
$$
and 
$$
\psi_S(x,t):=\varphi_S(x,t)+iD_m^{-1}\zeta_0\dot\gamma(x,t)+ iD_m^{-1}p(x,t),
$$
for $\tilde\psi_{\rm free}=\tilde\psi_f+\tilde\psi_{\zeta_0g}=\tilde\psi-\psi_S$ one obtains
$$
i\dot{\tilde\psi}_{\rm free}(x,t)=D_m\tilde\psi(x,t)-D_m^{-1}\zeta(t)\delta(x)-D_m\psi_S(x,t)+D_m^{-1}\zeta(t)\delta(x)
=D_m{\tilde\psi}_{\rm free}(x,t).
$$
Thus, $\tilde\psi_{\rm free}$ solves the Cauchy problem for the free Dirac equation with initial data $f+\zeta_0g$.
Hence, by the uniqueness of the solution  to this  Cauchy problem, we have $\tilde\psi_{\rm free}=\psi_{\rm free}$, and then $\tilde\psi=\psi$.
\end{proof}
\begin{lemma}\label{H-pr}
Let conditions (\ref{Lambda1})--(\ref{Lambda22}) hold, and let 
$\psi(t)\in {\cal D}_{\tilde F}$, $t\in [0,\tau]$, be a solution to (\ref{CP}). 
Then 
\begin{equation}\label{cHFT}
{\cal H}_{\tilde F}(\psi(\cdot,t))=\Vert D_m\psi_{reg}(\cdot,t)\Vert^2+\tilde U(\zeta(t))=\const,\quad t\in [0,\tau].
\end{equation}
\end{lemma}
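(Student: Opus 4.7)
The plan is to verify the conservation law after regularizing with the smoothing operator $K_m^\ve$ and then pass to the limit $\ve\to 0+$. Define $\psi^\ve(\cdot,t):=K_m^\ve\psi_{reg}(\cdot,t)$. Since $K_m^\ve=(-\Delta+m^2)^{-\ve}$ commutes with $D_m$, applying it to (\ref{psi-reg}) yields
$$
i\dot\psi^\ve(x,t)=D_m\psi^\ve(x,t)-i\dot\zeta(t)\,K_m^\ve g(x),\qquad t\in[0,\tau].
$$
For $\ve>0$ the smoothing raises the regularity: $\psi_{reg}\in C([0,\tau],H^{3/2-})$ becomes $\psi^\ve\in C([0,\tau],H^{3/2-+2\ve})$, which is enough to compute the time derivative of $\|D_m\psi^\ve\|^2$ directly on the Fourier side.

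Next I differentiate in $t$. Working pointwise in $\xi$, the Fourier transform satisfies $i\dot{\hat\psi^\ve}=(\alpha\cdot\xi+m\beta)\hat\psi^\ve-i\dot\zeta(t)/(\xi^2+m^2)^{1+\ve}$. The Hermiticity of $\alpha\cdot\xi+m\beta$ makes the Hamiltonian contribution to $\frac{d}{dt}|\hat\psi^\ve|^2$ vanish, and after multiplying by the weight $(\xi^2+m^2)$ and integrating in $\xi$, only the forcing term survives:
$$
\frac{d}{dt}\|D_m\psi^\ve(\cdot,t)\|^2=-2\,\Re\sum_{j=1}^{4}\dot\zeta_j(t)\,\overline{(K_m^{2\ve}\psi_{reg})_j(0,t)}.
$$
On the other hand, the reality of $\tilde U$ together with (\ref{FU}) give $\frac{d}{dt}\tilde U(\zeta(t))=2\,\Re\sum_j\dot\zeta_j(t)\,\overline{\tilde F_j(\zeta_j(t))}$, so that the two time derivatives have exactly opposite structure.

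Integrate on $[0,t]$ and let $\ve\to 0+$. Strong $L^2$-convergence $K_m^\ve\to I$ turns the left-hand side into $\|D_m\psi_{reg}(\cdot,t)\|^2-\|D_m\psi_{reg}(\cdot,0)\|^2$. The defining relation of ${\cal D}_{\tilde F}$ gives the pointwise limit $(K_m^{2\ve}\psi_{reg})(0,s)\to\tilde F(\zeta(s))$, and dominated convergence then produces the exact cancellation between the two contributions, yielding ${\cal H}_{\tilde F}(\psi(\cdot,t))={\cal H}_{\tilde F}(\psi_0)$, which is (\ref{cHFT}).

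The main obstacle is the uniform-in-$\ve$ bound on $|(K_m^{2\ve}\psi_{reg})(0,s)|$, $s\in[0,t]$, required to apply dominated convergence. I plan to secure it via the decomposition $\psi_{reg}=\psi_f+\zeta_0[g-\gamma]+[\varphi_S-\zeta g]+iD_m^{-1}p_S$, which follows from (\ref{sol-sum}), (\ref{rw}) and (\ref{pp-sol}), and then to apply the explicit limits established in Lemma \ref{lim_lam}, Lemma \ref{Phi_lim}, and Proposition \ref{PropD} to each summand. Each of those limits is continuous in $t\in[0,\tau]$, and careful inspection of the underlying special-function estimates shows that the convergence in $\ve$ is locally uniform in $t$, furnishing the required dominating function.
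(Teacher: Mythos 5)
Your argument is essentially the paper's: regularize with $K_m^\ve$, differentiate $\|K_m^\ve D_m\psi_{reg}\|^2$ using equation (\ref{psi-reg}), use that $-iD_m$ is skew-adjoint so the Hamiltonian term drops, identify the surviving boundary term as $(K_m^{2\ve}\psi_{reg})(0,t)$, and pass to $\ve\to 0+$ via the defining property of ${\cal D}_{\tilde F}$ to match $-\frac{d}{dt}\tilde U(\zeta)$. The only difference is cosmetic — you integrate on $[0,t]$ and invoke dominated convergence where the paper asserts the limit/derivative interchange distributionally; both versions rest on the same uniform-in-$\ve$ control of $(K_m^{2\ve}\psi_{reg})(0,\cdot)$, which you correctly flag as the point needing care and which the paper's lemmas (\ref{lim_lam}, \ref{Phi_lim}, \ref{PropD}) supply.
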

\begin{proof}
Equation (\ref{psi-reg}) and the second equation of (\ref{CP}) imply for any $t\in [0,\tau]$
\begin{eqnarray}\nonumber
\!\!\!\!\!\!\!\!\!\!\!\!\!\!\!\lim\limits_{\ve\to 0}\frac{d}{dt}\Vert K_m^{\ve}D_m\psi_{reg}\Vert^2
\!\!\!&=&\!\!\!\lim\limits_{\ve\to 0}\Big[\langle K_m^{\ve}D_m\dot\psi_{reg},\, K_m^{\ve}D_m\psi_{reg}\rangle 
+\langle K_m^{\ve}D_m\psi_{reg}, \,K_m^{\ve}D_m\dot\psi_{reg}\rangle\Big]\\
\nonumber
\!\!\!&=&\!\!\!\lim\limits_{\ve\to 0}\Big[\langle -iK_m^{\ve}D_m^2\psi_{reg}-K_m^{\ve}D_m\dot \zeta g,\,K_m^{\ve}D_m\psi_{reg}\rangle
+\langle K_m^{\ve}D_m\psi_{reg}, \,-iK_m^{\ve}D_m^2\psi_{reg}-K_m^{\ve}D_m\dot \zeta g\rangle\Big]\\
\nonumber
\!\!\!&=&\!\!\!\lim\limits_{\ve\to 0}\Big[-\langle D_m^{2}\dot \zeta g,\,K_m^{2\ve}\psi_{reg}\rangle-\langle K_m^{2\ve}\psi_{reg},\, D_m^{2}\dot \zeta g\rangle\Big]\\
\label{e-2}
\!\!\!&=&\!\!\!\lim\limits_{\ve\to 0}\Big[\!-\dot \zeta\!\cdot\langle  \delta(x),\,K_m^{2\ve}\psi_{reg}\rangle
-{\overline{\dot \zeta}}\!\cdot\langle K_m^{2\ve}\psi_{reg}, \,\delta(x)\rangle\Big]
\!=-\dot\zeta\! \cdot\overline {\tilde F}(\zeta)-{\overline{\dot\zeta}}\!\cdot{\tilde F}(\zeta)=-\frac{d}{dt}\tilde U(\zeta).
\end{eqnarray}
Here the scalar product  $\langle K_m^{\ve}D_m^2\psi_{reg},K_m^{\ve}D_m\psi_{reg}\rangle$ exists since
$K_m^{\ve}\psi_{reg}(\cdot,t)\in C([0,\infty),H^{3/2}(\R^3))$ for any $\ve>0$ due to Lemma \ref{e-1}.
The right hand side of (\ref{e-2}) is continuous bounded function since $\tilde U\in C^2(\C^4)$ and $\dot\zeta\in C[0,\tau]\otimes\C^4$.
Hence, in the sense of distributions$$
\frac{d}{dt}\Vert D_m\psi_{reg}(\cdot,t)\Vert^2=\frac{d}{dt}\lim\limits_{\ve\to 0}\Vert K_m^{\ve}D_m\psi_{reg}(\cdot,t)\Vert^2
=\lim\limits_{\ve\to 0}\frac{d}{dt}\Vert K_m^{\ve}D_m\psi_{reg}(\cdot,t)\Vert^2=-\frac{d}{dt}\tilde U(\zeta).
$$
Then (\ref{cHFT}) follows.
\end{proof}
\begin{corollary}\label{cor1}
The following identity holds
\begin{equation}\label{UtU}
\tilde U(\zeta(t))=U(\zeta(t)), \quad t\in [0,\tau].
\end{equation}
\end{corollary}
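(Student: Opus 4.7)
The plan is to chain together the energy conservation from Lemma \ref{H-pr}, the coercivity bound (\ref{Lambda2}), and the definition (\ref{Lambda}) of $\Lambda(\psi_0)$ in order to show $|\zeta(t)|\le\Lambda(\psi_0)$ on $[0,\tau]$; then (\ref{Lambda1}) delivers (\ref{UtU}) for free.

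First I would verify the statement at $t=0$. Since $\psi_{reg}(x,0)=f(x)$ and $\zeta(0)=\zeta_0$, the definition of ${\cal H}_F$ in (\ref{ec}) gives ${\cal H}_F(\psi_0)=\Vert D_m f\Vert^2+U(\zeta_0)$. Using (\ref{bound-below}) and discarding the nonnegative kinetic term yields
\[
b|\zeta_0|^2-a\le U(\zeta_0)\le {\cal H}_F(\psi_0),
\]
so $|\zeta_0|^2\le({\cal H}_F(\psi_0)+a)/b=\Lambda(\psi_0)^2$. By (\ref{Lambda1}) it follows that $\tilde U(\zeta_0)=U(\zeta_0)$, hence ${\cal H}_{\tilde F}(\psi_0)={\cal H}_F(\psi_0)$.

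Next I would invoke Lemma \ref{H-pr}: the modified energy is conserved,
\[
\Vert D_m\psi_{reg}(\cdot,t)\Vert^2+\tilde U(\zeta(t))={\cal H}_{\tilde F}(\psi_0)={\cal H}_F(\psi_0),\quad t\in[0,\tau].
\]
Dropping the nonnegative kinetic term and using the bound (\ref{Lambda2}) for $\tilde U$ gives
\[
b|\zeta(t)|^2-a\le\tilde U(\zeta(t))\le {\cal H}_F(\psi_0),
\]
whence $|\zeta(t)|\le\Lambda(\psi_0)$ for every $t\in[0,\tau]$. Applying (\ref{Lambda1}) pointwise in $t$ then yields $\tilde U(\zeta(t))=U(\zeta(t))$, which is (\ref{UtU}).

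There is no real obstacle here; the only subtlety is the initial identification ${\cal H}_{\tilde F}(\psi_0)={\cal H}_F(\psi_0)$, which must be checked before one can meaningfully compare the conserved modified energy to the value of the original energy at $t=0$. This identification relies on the fact that the cutoff radius $\Lambda(\psi_0)$ has been calibrated precisely so that $\zeta_0$ lies in the region where $\tilde U$ and $U$ agree, an observation that in turn feeds back through the conservation law to propagate the inequality $|\zeta(t)|\le\Lambda(\psi_0)$ to all $t\in[0,\tau]$.
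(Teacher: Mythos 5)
Your proof is correct and follows essentially the same route as the paper: first establish ${\cal H}_{\tilde F}(\psi_0)={\cal H}_F(\psi_0)$ by showing $|\zeta_0|\le\Lambda(\psi_0)$, then invoke the conservation law of Lemma \ref{H-pr} together with the coercivity (\ref{Lambda2}) to propagate the bound $|\zeta(t)|\le\Lambda(\psi_0)$ over $[0,\tau]$, and finally apply (\ref{Lambda1}). Your explicit emphasis on the need to first check the $t=0$ identification before using conservation is a clear articulation of the same logic the paper uses implicitly.
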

\begin{proof}
First note that 
\[
{\cal H}_{F}(\psi_0)\ge  U(\zeta_{0})\ge b|\zeta_{0}|^2-a.
\]
Therefore, $|\zeta_0|\le\Lambda(\psi_0)$ and then $\tilde U(\zeta_0)=U(\zeta_0)$, 
${\cal H}_{\tilde F}(\psi_0)={\cal H}_{F}(\psi_0)$.
Further, 
\[
{\cal H}_{\tilde F}(\psi(t))\ge \tilde U(\zeta(t))\ge b|\zeta(t)|^2-a,\quad t\in [0,\tau].
\]
Hence, (\ref{cHFT}) implies that
\begin{equation}\label{zeta_bound}
|\zeta(t)|\le \sqrt{({\cal H}_{\tilde F}(\psi(t))+a)/b}=\sqrt{({\cal H}_{\tilde F}(\psi_0)+a)/b}
=\sqrt{({\cal H}_{F}(\psi_0)+a)/b}=\Lambda(\psi_0),\quad t\in [0,\tau].
\end{equation}
\end{proof}
Identity  (\ref{UtU}) implies that we can replace $\tilde F$ by $F$ 
in  Proposition \ref{TLWP} and in Lemma \ref{H-pr}.
\smallskip\\
{\bf Proof of Theorem \ref{theorem-well-posedness}}. 
The solution $\psi(t)\in {\cal D}_{F}$ constructed in Proposition  \ref{TLWP}
exists for $0\le t\le\tau$, where the time span $\tau$ in Lemma \ref{LLWP} depends only on $\Lambda(\psi_0)$.
Hence, the bound (\ref{zeta_bound}) at $t=\tau$ allows us to extend the solution $\psi$ to the time
interval $[\tau, 2\tau]$. We proceed by induction to obtain the solution for all $t\ge 0$.

\end{document}